\newcommand{\N}{\mathbb{N}}
\newcommand{\Z}{\mathbb{Z}}
\newcommand{\Acal}{\mathcal{A}}
\newcommand{\Bcal}{\mathcal{B}}
\newcommand{\Ccal}{\mathcal{C}}
\newcommand{\Fcal}{\mathcal{F}}
\newcommand{\Gcal}{\mathcal{G}}
\newcommand{\Hcal}{\mathcal{H}}
\newcommand{\Lcal}{\mathcal{L}}
\newcommand{\Qcal}{\mathcal{Q}}
\newcommand{\Scal}{\mathcal{S}}
\newcommand{\Ucal}{\mathcal{U}}
\newcommand{\Xcal}{\mathcal{X}}
\newcommand{\Zcal}{\mathcal{Z}}
\newcommand{\be}{{\boldsymbol{e}}}
\newcommand{\bk}{{\boldsymbol{k}}}
\newcommand{\bell}{{\boldsymbol{\ell}}}
\newcommand{\bm}{{\boldsymbol{m}}}
\newcommand{\bn}{{\boldsymbol{n}}}
\DeclareMathOperator{\rep}{rep}
\DeclareMathOperator{\pad}{pad}
\DeclareMathOperator{\val}{val}
\DeclareMathOperator{\width}{width}
\DeclareMathOperator{\height}{height}
\newcommand{\Zrange}[1]{\llbracket0,#1\rrbracket}
\title{A Numeration System for Fibonacci-like\\Wang Shifts}
\author{S\'ebastien Labb\'e\inst{1} \and Jana Lep\v{s}ov\'a\inst{1,2}}
\authorrunning{S. Labb\'e, J. Lep\v{s}ov\'a}   
\institute{Univ. Bordeaux, CNRS,  Bordeaux INP, LaBRI, UMR 5800, F-33400,
        Talence, France\\
\and
FNSPE, CTU in Prague,
Trojanova 13, 120 00 Praha, Czech Republic\\
\email{sebastien.labbe@labri.fr},
\email{jana.lepsova@labri.fr}
}
\begin{document}

\maketitle

\begin{abstract}
Motivated by the study of Fibonacci-like Wang shifts,
we define a numeration system for $\mathbb{Z}$ and $\mathbb{Z}^2$
based on the binary alphabet $\{0,1\}$.
We introduce a set of 16 Wang tiles 
that admits a valid tiling of the plane described by
a deterministic finite automaton
taking as input the representation of a position $(m,n)\in\mathbb{Z}^2$ and
outputting a Wang tile.
\end{abstract}

\date{\today}

\section{Introduction}
\label{sec:intro}

A theorem of Cobham (1972) says that 
a sequence $u=(u_n)_{n\geq0}$ is $k$-automatic
with $k\geq2$
if and only if it is the image,
under a coding, of a fixed point of a $k$-uniform morphism 
\cite[Theorem 6.3.2]{MR1997038}.
This result was extended to non-uniform morphisms \cite{zbMATH01916667},
see also \cite[Theorem 3.4.1]{zbMATH05707089},
by replacing the usual base-$k$ expansion of nonnegative integers by an
abstract numeration system and a regular language \cite{zbMATH01588127}.
It was later extended to configurations $x:\N^d\to\Sigma$
in dimension $d\geq1$ based on the notion of shape-symmetric morphic words
\cite{zbMATH05701985},
see also \cite[Theorem 3.4.26]{zbMATH05707089} and \cite[\S~5]{zbMATH07258478}.

In this article, we explore an extension of Cobham's result beyond the nonnegative octant $\N^d$ 
to include configurations $x:\Z^d\to\Sigma$ defined on the whole lattice $\Z^d$.
We concentrate on one example in dimension $d=2$.
The example is motivated by the study of Wang tilings of the plane.
Given an alphabet $\Ccal$ of colors,
a Wang tile is a 4-tuple $(a,b,c,d)\in\Ccal^4$ that
represents the labeling of the edges of a unit square, by convention, in the order 
corresponding to a positive rotation on the complex plane, i.e., 
$a$ is the east edge label, 
$b$ is the north edge label, 
$c$ is the west edge label and
$d$ is the south edge label.
\begin{figure}[h]
	\centering
	\includegraphics[width=0.6\linewidth]{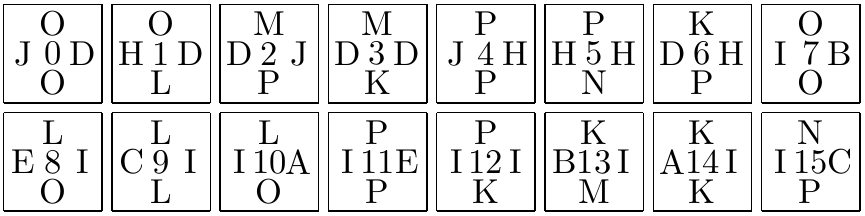}
	\caption{The set $\Zcal=\{z_0,\dots,z_{15}\}$ of 16 Wang tiles. 
    The index $i$ of the tile $z_i$ is written in the center of each tile.}
	\label{fig:16tiles}
\end{figure} 

We introduce a set $\Zcal=\{z_0,\dots,z_{15}\}$ of 16 Wang tiles shown in Figure~\ref{fig:16tiles}.
The set $\Zcal$ is a simplification of an existing aperiodic set of 19 Wang tiles
\cite{MR3978536} after identification of few colors, which was shown to be
related \cite{MR4226493} to the smallest set of aperiodic Wang tiles found by
Jeandel and Rao \cite{Jeandel2021}.

A valid configuration over the set of Wang tiles $\Zcal$ is a function $f:\Z^2\to\{0,\dots,15\}$ such that adjacent tiles have the same
label on their common edge, i.e., 
for every $n\in\Z^2$,
the east label of the tile $z_{f(n)}$ is equal to the west label of the tile $z_{f(n+e_1)}$
and
the north label of the tile $z_{f(n)}$ is equal to the south label of the tile $z_{f(n+e_2)}$.
    A partial valid configuration is shown in 
Figure~\ref{fig:configuration}. It is this particular configuration that is 
linked with a numeration system in Theorem 1.
The set $\Omega_\Zcal$ of valid configurations $f:\Z^2\to\{0,\dots,15\}$ is
called the Wang shift associated to the set of Wang tiles~$\Zcal$.

\begin{figure}
	\centering
	\includegraphics[width=0.55\linewidth]{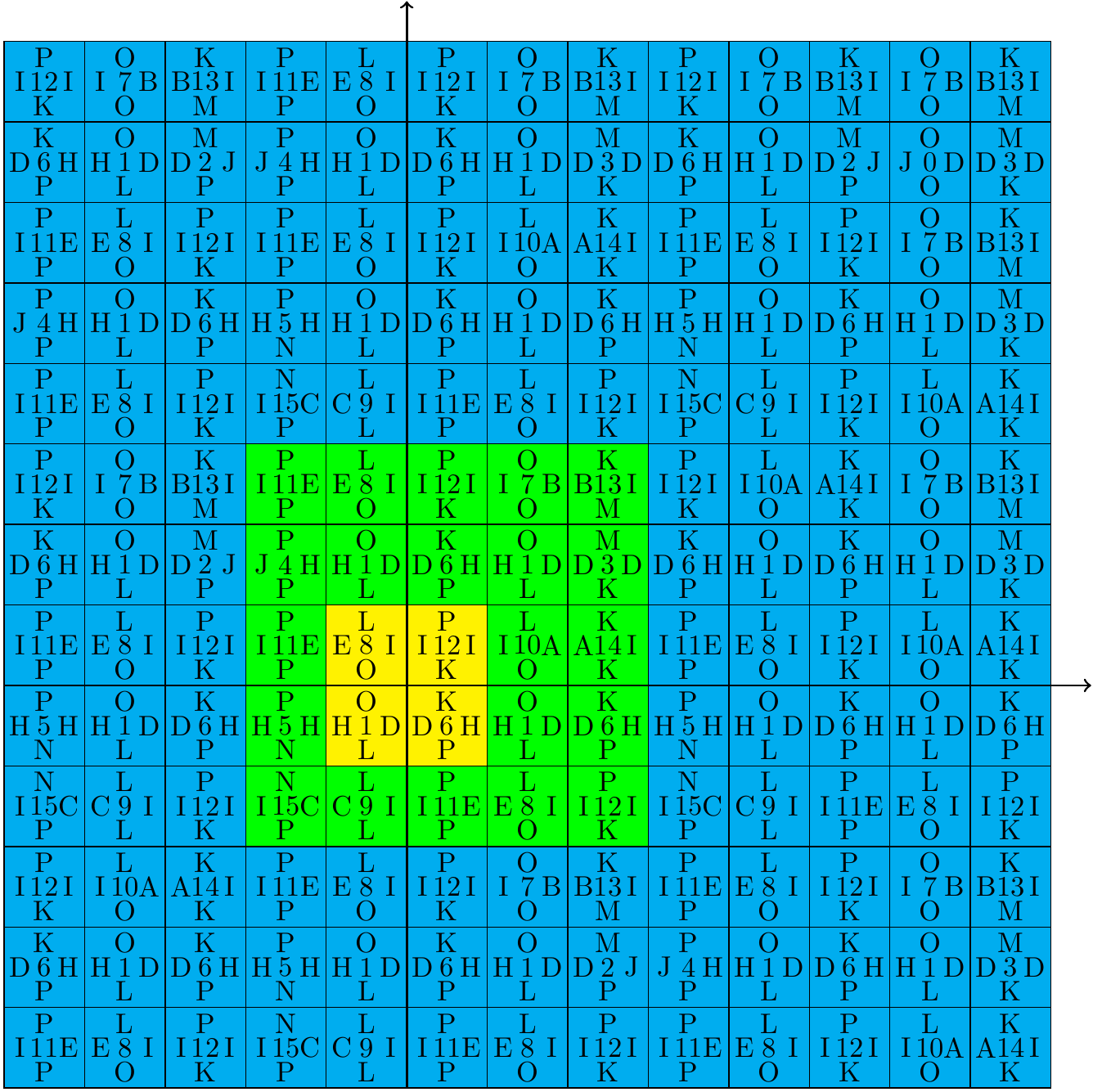}
    \caption{A partial valid configuration $[-5,8[^2\to\{0,\dots,15\}$ with
    the set $\Zcal$ of Wang tiles.}
	\label{fig:configuration}
\end{figure}

We show the following result which states a link between a specific 
configuration over
the set $\Zcal$ of Wang tiles shown in Figure~\ref{fig:16tiles},
a numeration system for $\Z^2$
and a deterministic finite automaton with output 
(DFAO). 
Definition of DFAO is recalled in Section 3
and corresponds to the classic definition \cite{MR1997038}.

\begin{theorem}\label{theoremA}
    Let $\Zcal$ be the set of 16 Wang tiles shown in Figure~\ref{fig:16tiles}.
    There exist a valid Wang configuration $x\in\Omega_\Zcal$
    and a DFAO $\Acal$
    and a numeration system $\Fcal$ for $\Z^2$ with a representation function 
    $\rep_\Fcal:\Z^2\to\{
        \left(\begin{smallmatrix}0\\0\end{smallmatrix}\right),
        \left(\begin{smallmatrix}0\\1\end{smallmatrix}\right),
        \left(\begin{smallmatrix}1\\0\end{smallmatrix}\right),
        \left(\begin{smallmatrix}1\\1\end{smallmatrix}\right)
        \}^*$
	such that the tile at position $\bn\in\Z^2$ in $x$
	is $x_\bn=\Acal(\rep_\Fcal(\bn))$.
\end{theorem}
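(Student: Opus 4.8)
The plan is to prove Theorem~\ref{theoremA} constructively: I would exhibit the configuration $x$, the numeration system $\Fcal$, and the automaton $\Acal$ explicitly, and then verify that they fit together. The natural starting point is the Fibonacci numeration (Zeckendorf) system, since the paper's title advertises a \emph{Fibonacci-like} Wang shift and the tile set $\Zcal$ descends from the Jeandel--Rao family, whose combinatorics are governed by the golden ratio. Concretely, I expect $\Fcal$ to be built from the one-dimensional Fibonacci system on $\N$, first extended to all of $\Z$ by encoding the sign through a folding of the integers onto $\N$ (so that negative positions receive a representation too), and then lifted to $\Z^2$ by reading the two coordinates in parallel, which is exactly why the digit alphabet is the set of column vectors $\left(\begin{smallmatrix}0\\0\end{smallmatrix}\right),\left(\begin{smallmatrix}0\\1\end{smallmatrix}\right),\left(\begin{smallmatrix}1\\0\end{smallmatrix}\right),\left(\begin{smallmatrix}1\\1\end{smallmatrix}\right)$ over $\{0,1\}$.

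The first step is to make $\rep_\Fcal:\Z^2\to\{\ldots\}^*$ precise and check it is a well-defined bijection (or at least injection with a recognizable language of admissible words): I would fix the digit-expansion convention, verify that every $\bn\in\Z^2$ has a unique padded representation of some length, and describe the regular language $L=\rep_\Fcal(\Z^2)$ of valid words, presumably characterized by a forbidden-pattern (no two consecutive $1$'s in either coordinate row, à la Zeckendorf) condition plus a sign-handling prefix. The second step is to define the DFAO $\Acal$ whose state set tracks just enough information about the partial expansion read so far to determine, upon reaching the end of the word, which of the $16$ tiles sits at $\bn$; the output function colors each accepting (or final) state with an element of $\{0,\ldots,15\}$. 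I would construct $\Acal$ by analyzing how the tile at $\bn$ depends on the tile at the ``parent'' position under the desubstitution/self-similarity of the configuration, so that the transitions of $\Acal$ mirror one step of reading a digit.

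The third step, and the heart of the argument, is to verify the identity $x_\bn=\Acal(\rep_\Fcal(\bn))$ for all $\bn\in\Z^2$. I would do this by choosing $x$ to be a self-similar (substitutive) fixed point of the desubstitution associated to $\Zcal$, and then arguing by induction on the length of $\rep_\Fcal(\bn)$: reading one more digit corresponds to one application of the inflation rule, and the compatibility of the inflation with the Fibonacci recurrence on coordinates is what forces $\Acal$'s transition structure. Alongside this I must confirm that the resulting $x$ is genuinely a \emph{valid} configuration in $\Omega_\Zcal$, i.e.\ that horizontally and vertically adjacent tiles match on shared edges; this reduces to a finite check that the $16$ tiles' edge colors are respected by all transitions $\Acal$ can produce between $\bn$ and $\bn+e_1$, $\bn+e_2$.

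The main obstacle I anticipate is the extension to $\Z^2$ rather than $\N^2$: the classical Cobham/shape-symmetry machinery produces configurations on the octant $\N^d$, and gluing four octants into a consistent configuration on the full lattice $\Z^d$ requires the automaton and the sign-folding of $\rep_\Fcal$ to agree across the coordinate axes and at the origin. Getting the boundary behavior right—so that the finitely many seam tiles along the axes are still edge-compatible and are correctly output by $\Acal$ on the specially-prefixed representations of negative or mixed-sign positions—is where the construction is most delicate, and I would expect to devote the bulk of the verification to this finite but fiddly consistency check, likely aided by a computer-assisted enumeration of the automaton's reachable states.
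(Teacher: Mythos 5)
Your plan follows essentially the same route as the paper: a two's-complement-style signed Fibonacci numeration padded and read coordinate-wise on $\Z^2$, a DFAO extracted from the self-similarity (desubstitution) of $\Omega_\Zcal$ with a $2\times2$ seed gluing the four quadrants, a validity argument via the substitutive fixed point, and an induction on the length of $\rep_\Fcal(\bn)$ in which appending digits matches one inflation step. The only refinement you would discover in execution is that the paper works with $\phi^2$ and appends digits in pairs (via $h:0\mapsto00,\,1\mapsto01,\,2\mapsto10$), since canonical representations have odd length and the sign-carrying leading digit forces the key lemma to treat negative positions by translating by $F_{2k}$ back to the positive case.
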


In fact, the Wang shift $\Omega_\Zcal$ is self-similar, minimal and aperiodic.
Moreover, it is topologically conjugate to the Wang shift $\Omega_\Ucal$
generated by the set $\Ucal$ of 19 Wang tiles introduced in \cite{MR3978536}.
These results are not shown here due to lack of space and will be part of an
extended version of this article.

The article is structured as follows.
In Section~\ref{sec:fibo-num-system},
we introduce a Fibonacci numeration system for $\Z$ and $\Z^2$.
In Section~\ref{sec:the-automaton-for-Z},
we illustrate how to change the usual automaton in Cobham's theorem for $\N$ so
that it can read the representation of integers including the negative ones.
In Section~\ref{sec:two-dim-words},
we recall the definitions and notations for
two-dimensional words, languages and morphisms.
The self-similarity of the Wang shift $\Omega_\Zcal$
is stated in Section~\ref{sec:self-similarity} (the proof is available in the
appendix of the preprint version) from which the automaton of
Theorem~\ref{theoremA} 
is deduced, see Figure~\ref{fig:automaton}.
The proof of Theorem~\ref{theoremA} is done in
Section~\ref{sec:proof-main-results}.

\textbf{Acknowledgements}.
This work was supported by the Agence Nationale de la Recherche through the
project Codys (ANR-18-CE40-0007).

\section{A Fibonacci Numeration System for $\Z$ and $\Z^2$}
\label{sec:fibo-num-system}



Let $(F_n)_{n\geq 1}$ be the Fibonacci sequence
defined with the reccurent relation
$$ F_0 = 1, F_1 = 1, F_2 = 2, F_{n+2} = F_{n+1} + F_n \ \ \text{ for all } n \geq 1.$$ 
By the Zeckendorf theorem \cite{MR308032} every nonnegative integer $n$ can be represented
as a unique sum of nonconsecutive Fibonacci numbers $ n = \sum_{i=1}^{\ell}w_i
F_i,$ where 
$\ell = \max\left\{i\in\N_0 : F_i \leq n\right\}$, 
$w_i \in \left\{0,1\right\}$ 
and $w_i  w_{i+1} = 0, \text{ for all } i \in \left\{1,2,..., \ell-1\right\}$.

Inspired by the Two's complement, ``\emph{the most common method of representing signed
integers on computers}'',\footnote{\url{https://en.wikipedia.org/wiki/Two's_complement}} we introduce a numeration system $\Fcal$ which extends
the Fibonacci numeration system to all $n\in\Z$ as follows.
For each binary word $w=w_{2k+1}w_{2k}\cdots w_1\in\Sigma^{2k+1}$ of odd
length over the alphabet $\Sigma=\{0,1\}$, we define
\[
    \val_\Fcal(w) = \sum_{i=1}^{2k}w_i F_i - w_{2k+1}F_{2k}.
\]
The following lemma is an exercise based on the Fibonacci recurrence.
\begin{lemma}
    Let $k\in\N$ and
    $w\in\Sigma^{2k}\setminus \Sigma^*11\Sigma^*$.
    We have
    \begin{enumerate}
\item $\val_\Fcal(0w)=\val_\Fcal(000w)=\val_\Fcal(110w)$,
\item $\val_\Fcal(1w)=\val_\Fcal(101w)$,
\item $\val_\Fcal(100w)=\val_\Fcal(000w)-F_{2k+2}$,
\item $0\leq\val_\Fcal(0w)<F_{2k+1}$,
\item $-F_{2k+2}\leq\val_\Fcal(100w)<0$.\qed
    \end{enumerate}
\end{lemma}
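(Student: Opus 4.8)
The plan is to reduce every item to a direct expansion of $\val_\Fcal$ followed by a single use of the Fibonacci recurrence $F_{2k+2}=F_{2k+1}+F_{2k}$. Writing $S:=\sum_{i=1}^{2k}w_iF_i$ for the contribution of $w$ itself, the first step is to record the weight carried by each prepended digit: the leading (sign) digit of an odd-length word contributes with weight $-F_{2k}$ for the length-$(2k+1)$ words and $-F_{2k+2}$ for the length-$(2k+3)$ words, while all interior digits keep their usual positive weight $F_i$. With this bookkeeping fixed, each of the words $0w$, $000w$, $110w$, $1w$, $101w$, $100w$ expands to $S$ plus an explicit correction. For items (1)--(3) I would simply compute these corrections: $0w$ and $000w$ give $S$ outright; $110w$ gives $S+F_{2k+2}-F_{2k+2}=S$; $1w$ gives $S-F_{2k}$ and $101w$ gives $S+F_{2k+1}-F_{2k+2}=S-F_{2k}$ by the recurrence; and $100w$ gives $S-F_{2k+2}$, which is exactly $\val_\Fcal(000w)-F_{2k+2}$. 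These are one-line verifications once the weights are set.

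The only genuinely non-mechanical point is the two-sided bound in item (4), from which (5) will follow immediately. Nonnegativity $\val_\Fcal(0w)=S\ge0$ is clear since every term is nonnegative. For the strict upper bound I would prove the auxiliary claim that every admissible word $v=v_n\cdots v_1$ (no factor $11$) satisfies $\sum_{i=1}^{n}v_iF_i\le F_{n+1}-1$, by induction on $n$: if $v_n=0$ the bound follows from the level $n-1$ since $F_n\le F_{n+1}$, and if $v_n=1$ then necessarily $v_{n-1}=0$, so the sum equals $F_n+\sum_{i=1}^{n-2}v_iF_i\le F_n+(F_{n-1}-1)=F_{n+1}-1$ using $F_n+F_{n-1}=F_{n+1}$. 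Specializing to $n=2k$ gives $S\le F_{2k+1}-1<F_{2k+1}$, which is (4). I expect this induction to be the main (though still elementary) obstacle, since it is the one place where the admissibility hypothesis $w\notin\Sigma^*11\Sigma^*$ is actually used.

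Finally, item (5) is obtained by shifting. Since $\val_\Fcal(100w)=S-F_{2k+2}$ by item (3) and $0\le S<F_{2k+1}$ by item (4), I get $-F_{2k+2}\le S-F_{2k+2}<F_{2k+1}-F_{2k+2}=-F_{2k}<0$, using $F_{2k+2}-F_{2k+1}=F_{2k}$ once more. This closes the argument with no further computation.
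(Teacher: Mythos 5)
Your proof is correct: the weight bookkeeping for the prepended digits matches the definition of $\val_\Fcal$, items (1)--(3) and (5) reduce to the recurrence $F_{2k+2}=F_{2k+1}+F_{2k}$ exactly as you compute, and your inductive bound $\sum_{i=1}^{n}v_iF_i\le F_{n+1}-1$ for $11$-free words is the standard Zeckendorf estimate needed for (4). The paper gives no written proof --- it explicitly leaves the lemma as ``an exercise based on the Fibonacci recurrence'' --- and your argument is precisely the intended one.
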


Thus, the first digit of $w\in\Sigma^{2k+1}\setminus \Sigma^*11\Sigma^*$ gives the sign (nonnegative or negative) of the value $\val_\Fcal(w)$.
We can show the following.
\begin{proposition}
For every $n\in\Z$ there exists a
    unique odd-length word 
    \[w\in\Sigma(\Sigma\Sigma)^*\setminus
    \left( \Sigma^*11\Sigma^* \cup 000\Sigma^* \cup 101\Sigma^*\right)\]
    such that $n=\val_\Fcal(w)$.
\end{proposition}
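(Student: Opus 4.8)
The plan is to prove existence and uniqueness separately, in both cases reducing to the classical Zeckendorf theorem by exploiting the sign information encoded in the leading digit. The key structural fact supplied by the preceding lemma is that for $w\in\Sigma^{2k}$ without two consecutive ones, $\val_\Fcal(0w)$ ranges exactly over $\Zrange{F_{2k+1}-1}$ (item~4) while $\val_\Fcal(100w)$ ranges exactly over $\ZZrange{-F_{2k+2}}{-1}$ (item~5). So I would first handle the nonnegative case: given $n\geq0$, choose $k$ minimal so that $n<F_{2k+1}$, apply Zeckendorf to write $n=\sum_{i=1}^{2k}w_iF_i$ with no two consecutive ones, and prepend a $0$ to obtain a word $0w_{2k}\cdots w_1$ of odd length $2k+1$ with $\val_\Fcal=n$. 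I must then check this word lies in the claimed language, i.e.\ that it avoids the forbidden prefixes $000\Sigma^*$, $101\Sigma^*$ and the factor $11$; avoiding $11$ is the Zeckendorf condition, and the minimality of $k$ forces $w_{2k}=1$ (otherwise $n<F_{2k}\le F_{2k-1}$, contradicting minimality), which simultaneously rules out both the $000$ and the $101$ prefix.

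For the negative case I would mirror this: given $n<0$, apply the nonnegative construction to $n+F_{2k+2}\in\ZZrange{0}{F_{2k+2}-1}$ for suitable $k$, obtaining a word of the form $0w$ with $w\in\Sigma^{2k}$ avoiding $11$, and then use lemma item~3, $\val_\Fcal(100w)=\val_\Fcal(000w)-F_{2k+2}=\val_\Fcal(0w)-F_{2k+2}=n$, so that $100w$ represents $n$. Again I would verify membership in the language: the prefix is $100$, which avoids $000$ and $101$, and I would choose $k$ minimal subject to $-F_{2k+2}\le n$, which by item~5 guarantees the value genuinely needs the length-$(2k+3)$ representation rather than a shorter one, and which should force the digit structure so that no forbidden factor appears.

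The uniqueness argument is where the normal-form exclusions $000\Sigma^*$, $101\Sigma^*$ do the real work, and I expect it to be the main obstacle. The point of these exclusions is to kill the redundancies recorded in lemma items~1 and~2: item~1 says $0w$, $000w$ and $110w$ all share a value, and item~2 says $1w$ and $101w$ share a value, so forbidding the prefixes $000$ and $101$ (together with the global ban on the factor $11$, which removes $110w$) is designed to select exactly one representative from each such equivalence class. Concretely I would argue that any admissible word is either of the form $0w$ with $w_{2k}=1$ (the unique nonnegative normal form) or $100w$ (the unique negative normal form), and that within each family the representation is forced: on the nonnegative side uniqueness follows from uniqueness in Zeckendorf once the leading digit is pinned to $0$ and the length pinned by item~4, and on the negative side by reducing through item~3 back to the nonnegative case. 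The delicate bookkeeping is checking that the length (the value of $k$) is itself determined by $n$ — this is exactly what items~4 and~5 provide, since they show the two value-ranges for each length are disjoint and abut, so $n$ lands in a unique length class — and that no admissible word of one parity of prefix can ever equal the value of an admissible word of the other prefix, which again follows from the sign dichotomy $\val_\Fcal(0w)\ge0>\val_\Fcal(100w)$.
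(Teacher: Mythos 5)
Your overall strategy---reduce to Zeckendorf via the sign carried by the leading digit, and handle $n<0$ by shifting with item~3 of the lemma---is exactly the paper's strategy, but one load-bearing step is false. You claim that minimality of $k$ (subject to $n<F_{2k+1}$) forces $w_{2k}=1$, justified by ``otherwise $n<F_{2k}\le F_{2k-1}$''; that inequality is backwards, and the claim itself fails: for $n=3$ the minimal $k$ is $2$ (since $F_3=3$ is not $>3$ while $F_5=8>3$), the Zeckendorf digits are $0100$, and indeed $\rep_\Fcal(3)=00100$, whose second digit is $0$. What minimality actually yields is only that $w_{2k}$ and $w_{2k-1}$ are not \emph{both} zero (else $n\le F_{2k-1}-1$, contradicting $n\ge F_{2k-1}$), i.e.\ precisely that the prefix $000$ is avoided; the prefix $101$ is excluded for the trivial reason that your word begins with $0$. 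Existence therefore survives with a corrected justification, but your uniqueness argument does not as written: it rests on the dichotomy ``every admissible word is $0w$ with $w_{2k}=1$, or $100w$,'' which excludes legitimate representations such as $00100$. The admissible nonnegative words are $0$, $01u$, and $001u$ with $1u$ a Zeckendorf word, and you still must rule out a collision between the $01u$ and $001u'$ families---e.g.\ by noting their value ranges are $[F_{2k},F_{2k+1})$ and $[F_{2k-1},F_{2k})$ respectively, or, as the paper does, by a minimal-length-counterexample reduction that strips the prefix and invokes Zeckendorf uniqueness on $1u$ and $1u'$.

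A second, related soft spot is the length-determination step: you assert that items~4 and~5 show the value ranges ``for each length are disjoint and abut, so $n$ lands in a unique length class.'' Those items only give $0\le\val_\Fcal(0w)<F_{2k+1}$ and $-F_{2k+2}\le\val_\Fcal(100w)<0$ over \emph{all} $w\in\Sigma^{2k}$ without the factor $11$, and these intervals are nested as $k$ grows, not disjoint; they separate the signs for a fixed length but say nothing about distinct lengths. Disjointness of the length classes again requires the sharper lower bound $\val_\Fcal\ge F_{2k-1}$ coming from the exclusion of the prefix $000$ (and the analogous bound $\val_\Fcal<-F_{2k-2}$ forced on the negative side by excluding $11$ and $101$, which pins the prefix to $100$). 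Once the normal forms are characterized correctly as $0$, $01u$, $001u$, $100u$, both the length determination and the uniqueness within each length fall out, and the result matches the paper's Lemma~\ref{lem:level}.
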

\begin{proof}
    (Unicity).
    Let $w,w'\in\Sigma(\Sigma\Sigma)^*\setminus
    \left( \Sigma^*11\Sigma^* \cup 000\Sigma^* \cup 101\Sigma^*\right)$
    of minimal length
    such that $\val_\Fcal(w)=\val_\Fcal(w')$.
    If $w\in1\Sigma^*$, then $\val_\Fcal(w)=\val_\Fcal(w')<0$
    and $w'\in1\Sigma^*$ as well.
    In fact, we must have $w,w'\in100\Sigma^*$.
    Thus $w=10u$ and $w=10u'$ for some words $u,u'$
    such that $\val_\Fcal(u)=\val_\Fcal(u')$. This contradicts the minimality
    of the lengths of $w$ and $w'$.
    If $w\in0\Sigma^*$, then $\val_\Fcal(w)=\val_\Fcal(w')\geq0$
    and $w'\in0\Sigma^*$ as well.
    But $w,w'\notin000\Sigma^*$, thus 
    $w\in\{01,001\}u$
    and
    $w'\in\{01,001\}u'$ for some $u,u'\in\Sigma^*$.
    From Zeckendorf's theorem applied to $1u$ and $1u'$, we conclude that $u=u'$.

    (Existence).
    If $n=0$, then $n=0=\rep_\Fcal(0)$.
    Assume that $n>0$.
    From Zeckendorf's theorem, there exists
    a unique $u\in 1\Sigma^*\setminus \Sigma^*11\Sigma^*$ 
    such that $n=\val_F(u)$.
    If $u$ has odd-length, then $n=\val_\Fcal(00u)$.
    If $u$ has even-length, then $n=\val_\Fcal(0u)$.
    Now assume that $n<0$.
    Let $k\in\N$ be such that
    $-F_{2k}\leq n<-F_{2k-2}$.
    We have
    $0\leq n+F_{2k}<F_{2k}-F_{2k-2}=F_{2k-1}$.
    Let $w\in\Sigma^{2k-2}\setminus \Sigma^*11\Sigma^*$ such that
    $\val_\Fcal(000w)=n+F_{2k}$.
    We thus have $n=\val_\Fcal(100w)$.
    \qed
\end{proof}

\begin{definition}[Numeration system $\Fcal$ for $\Z$]
For each $n\in\Z$, we denote by $\rep_\Fcal(n)$ the unique word satisfying the
proposition. 
\end{definition}

The numeration system $\Fcal$ is illustrated in
Figure~\ref{fig:numeration-system-F}.

\begin{figure}[h]
\begin{center}
    \begin{tikzpicture}[>=latex]
    \begin{scope}[yscale=.35]
    \node (S)  [draw,circle] at (0,-8) {start};
    \node (0)  [draw,circle] at (2,1) {0};
    \node (-1) [draw,circle] at (2,-8) {-1};
    \node (1)  [draw,circle] at (4,1) {1};
    \node (2)  [draw,circle] at (4,6) {2};
    \node (-2) [draw,circle] at (4,-8) {-2};
    \node (3)  [draw,circle] at (6,1)  {3};
    \node (4)  [draw,circle] at (6,4)  {4};
    \node (5)  [draw,circle] at (6,6)  {5};
    \node (6)  [draw,circle] at (6,9)  {6};
    \node (7)  [draw,circle] at (6,11) {7};
    \node (-3) [draw,circle] at (6,-3) {-3};
    \node (-4) [draw,circle] at (6,-5) {-4};
    \node (-5) [draw,circle] at (6,-8) {-5};
    \node (8)    at (8,1)  {\scriptsize 8};
    \node (9)    at (8,2)  {\scriptsize 9};
    \node (10)   at (8,3)  {\scriptsize 10};
    \node (11)   at (8,4)  {\scriptsize 11};
    \node (12)   at (8,5)  {\scriptsize 12};
    \node (13)   at (8,6)  {\scriptsize 13};
    \node (14)   at (8,7)  {\scriptsize 14};
    \node (15)   at (8,8)  {\scriptsize 15};
    \node (16)   at (8,9)  {\scriptsize 16};
    \node (17)   at (8,10) {\scriptsize 17};
    \node (18)   at (8,11) {\scriptsize 18};
    \node (19)   at (8,12) {\scriptsize 19};
    \node (20)   at (8,13) {\scriptsize 20};
    \node (-6)   at (8,-1) {\scriptsize -6};
    \node (-7)   at (8,-2) {\scriptsize -7};
    \node (-8)   at (8,-3) {\scriptsize -8};
    \node (-9)   at (8,-4) {\scriptsize -9};
    \node (-10)  at (8,-5) {\scriptsize -10};
    \node (-11)  at (8,-6) {\scriptsize -11};
    \node (-12)  at (8,-7) {\scriptsize -12};
    \node (-13)  at (8,-8) {\scriptsize -13};
    \end{scope}
    \draw[loop above,->] (0) to node{00} (0);
    \draw[loop above,->] (-1) to node{01} (-1);
    \draw[<-] (S)  -- ++ (-1,0);
    \draw[->] (S)  -- node[above]{0} (0);
    \draw[->] (S)  -- node[above]{1} (-1);
    \draw[->] (0)  -- node[above]{10} (2);
    \draw[->] (0)  -- node[above]{01} (1);
    \draw[->] (-1) -- node[above]{00} (-2);
    \draw[->] (2)  -- node[fill=white]{10} (7);
    \draw[->] (2)  -- node[fill=white]{01} (6);
    \draw[->] (2)  -- node[fill=white]{00} (5);
    \draw[->] (1)  -- node[fill=white]{01} (4);
    \draw[->] (1)  -- node[fill=white]{00} (3);
    \draw[->] (-2) -- node[fill=white]{10} (-3);
    \draw[->] (-2) -- node[fill=white]{01} (-4);
    \draw[->] (-2) -- node[fill=white]{00} (-5);
    \draw[->] (7)  -- node[fill=white,near end]{\tiny 10} (20);
    \draw[->] (7)  -- node[fill=white,near end]{\tiny 01} (19);
    \draw[->] (7)  -- node[fill=white,near end]{\tiny 00} (18);
    \draw[->] (6)  -- node[fill=white,near end]{\tiny 01} (17);
    \draw[->] (6)  -- node[fill=white,near end]{\tiny 00} (16);
    \draw[->] (5)  -- node[fill=white,near end]{\tiny 10} (15);
    \draw[->] (5)  -- node[fill=white,near end]{\tiny 01} (14);
    \draw[->] (5)  -- node[fill=white,near end]{\tiny 00} (13);
    \draw[->] (4)  -- node[fill=white,near end]{\tiny 01} (12);
    \draw[->] (4)  -- node[fill=white,near end]{\tiny 00} (11);
    \draw[->] (3)  -- node[fill=white,near end]{\tiny 10} (10);
    \draw[->] (3)  -- node[fill=white,near end]{\tiny 01}  (9);
    \draw[->] (3)  -- node[fill=white,near end]{\tiny 00}  (8);
    \draw[->] (-3) -- node[fill=white,near end]{\tiny 10} (-6);
    \draw[->] (-3) -- node[fill=white,near end]{\tiny 01} (-7);
    \draw[->] (-3) -- node[fill=white,near end]{\tiny 00} (-8);
    \draw[->] (-4) -- node[fill=white,near end]{\tiny 01} (-9);
    \draw[->] (-4) -- node[fill=white,near end]{\tiny 00} (-10);
    \draw[->] (-5) -- node[fill=white,near end]{\tiny 10} (-11);
    \draw[->] (-5) -- node[fill=white,near end]{\tiny 01} (-12);
    \draw[->] (-5) -- node[fill=white,near end]{\tiny 00} (-13);
\node at (-.2,2.0) {\scriptsize\begin{tabular}{|c|r|}
					$n$ & $\rep_\Fcal(n)$ \\ \hline
					7  & 01010   \\
					6  & 01001   \\
					5  & 01000   \\
					4  & 00101   \\
					3  & 00100   \\
					2  & 010     \\
					1  & 001     \\
					0  & 0\\
					-1  & 1        \\
					-2  & 100        \\
					-3  & 10010      \\
					-4  & 10001      \\
					-5  & 10000      \\
				\end{tabular}};
\node at (9.6,1) {\scriptsize
				\begin{tabular}{|c|r|}
					$n$ & $\rep_\Fcal(n)$ \\ \hline
					20 & 0101010 \\
					19 & 0101001 \\
					18 & 0101000 \\
					17 & 0100101 \\
					16 & 0100100 \\
					15 & 0100010 \\
					14 & 0100001 \\
					13 & 0100000 \\
					12 & 0010101 \\
					11 & 0010100 \\
					10 & 0010010 \\
					9  & 0010001 \\
					8  & 0010000 \\
					\hline
					-6  & 1001010    \\
					-7  & 1001001    \\
					-8  & 1001000    \\
					-9  & 1000101    \\
					-10 & 1000100   \\
					-11 & 1000010   \\
					-12 & 1000001   \\
					-13 & 1000000   \\
				\end{tabular}};
\end{tikzpicture}
\end{center}
            \caption{Representations in the numeration system $\Fcal$ of $n
            \in [-13,21[$.}
			\label{fig:numeration-system-F}
\end{figure}

We now extend that numeration system to $\Z^2$. If $\bn=(n_1,n_2)\in\Z^2$ is
such that $|n_1|>>|n_2|$, then the word representing $n_2$ is smaller than
the word representing $n_1$. We handle this issue by padding the smaller word
so that it becomes of the same length as the longer one. The padding is done
differently according to the sign of the number involved: nonnegative numbers
are padded with $00$, while negative numbers are padded with $10$.  This is
consistent with the numeration system $\Fcal$, refer also to the two loops in
Figure~\ref{fig:numeration-system-F}.

\begin{definition}[Numeration system $\Fcal$ for $\Z^2$]
    Let $\bn=(n_1,n_2)\in\Z^2$.
    We define
    \[
        \rep_\Fcal(\bn)=\left(
        \begin{array}{c}
            \pad_t(\rep_\Fcal(n_1))\\
            \pad_t(\rep_\Fcal(n_2))
        \end{array}
        \right)
    \]
    where $t=\max\{|\rep_\Fcal(n_1)|,
                      |\rep_\Fcal(n_2)|\}$
                      and
    \[
        \pad_t(w)=
        \begin{cases}
            (00)^{\frac{1}{2}(t-|w|)}w & \text{ if } w\in0\Sigma^*,\\
            (10)^{\frac{1}{2}(t-|w|)}w & \text{ if } w\in1\Sigma^*.
        \end{cases}
    \]
\end{definition}

	The set of all canonical representations $\rep_\Fcal(\bn)$ for $\bn\in\Z^2$ are words
	in $\left\{ 
    \left(\begin{smallmatrix}
	0 \\ 0
    \end{smallmatrix}\right),
    \left(\begin{smallmatrix}
	0 \\ 1
    \end{smallmatrix}\right),
    \left(\begin{smallmatrix}
	1 \\ 0
    \end{smallmatrix}\right),
    \left(\begin{smallmatrix}
	1 \\ 1
    \end{smallmatrix}\right)
	\right\}^*$ of odd length
	such that there are no consecutive one's in each row. 
	E.g. $\rep_\Fcal(-2,9) = 
    \left(\begin{smallmatrix}
	1010\textcolor{red}{100} \\ \textcolor{red}{0010001}
    \end{smallmatrix}\right),
	\rep_\Fcal(14,2) = 
    \left(\begin{smallmatrix}
	\textcolor{red}{0100001} \\ 0000\textcolor{red}{010}
    \end{smallmatrix}\right)$.
The length of the representation splits $\Z$ and $\Z^2$ into levels.

\begin{lemma}\label{lem:level}
    For every $k\in\N$, we have
    \begin{align*}
        &\{n\in\Z\colon |\rep_\mathcal{F}(n)| = 2k+1\}
         = I_{k}\setminus I_{k-1},\\
        &\{\bn\in\Z^2\colon |\rep_\mathcal{F}(\bn)| = 2k+1\}
         = I_{k}^2\setminus I_{k-1}^2
    \end{align*}
    where $I_k = \{i\in\Z\mid -F_{2k}\leq i < F_{2k+1}\}$ for $k\geq0$ and $I_{-1}=\varnothing$.
\end{lemma}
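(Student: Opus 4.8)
The plan is to reduce both displayed equalities to a single one-dimensional equivalence, namely that for every $n\in\Z$
\[
    |\rep_\Fcal(n)|\le 2k+1 \iff n\in I_k,
\]
and then to obtain the rest by elementary set manipulations. I would prove this equivalence first. For the forward implication, write $|\rep_\Fcal(n)|=2m+1$ with $m\le k$. If $n\ge 0$, then $\rep_\Fcal(n)=0w'$ with $w'\in\Sigma^{2m}\setminus\Sigma^*11\Sigma^*$, so part~(4) of the preceding lemma gives $0\le n<F_{2m+1}\le F_{2k+1}$. If $n<0$, then either $m=0$ and $n=-1$, or $m\ge 1$ and $\rep_\Fcal(n)=100w'$ with $w'\in\Sigma^{2m-2}\setminus\Sigma^*11\Sigma^*$; in the latter case part~(5) (applied with $w'$ of length $2(m-1)$) gives $-F_{2k}\le -F_{2m}\le n<0$, while $n=-1\ge -F_{2k}$ in the former. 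In all cases $n\in[-F_{2k},F_{2k+1})=I_k$.

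For the converse I would simply read off the lengths from the existence part of the preceding proposition. If $0\le n<F_{2k+1}$, its Zeckendorf expansion uses only $F_1,\dots,F_{2k}$, and the construction there produces a canonical word of length at most $2k+1$; if $-F_{2k}\le n<0$, the same construction yields a word of length $2j+1$ where $-F_{2j}\le n<-F_{2j-2}$, whence $j\le k$. This settles the equivalence. Since the intervals are nested, $I_{k-1}\subseteq I_k$ (because $F_{2k-2}\le F_{2k}$ and $F_{2k-1}\le F_{2k+1}$), and representation lengths are always odd, the set of integers of length exactly $2k+1$ is the difference of those of length at most $2k+1$ and those of length at most $2k-1$; this gives $\{n\in\Z:|\rep_\Fcal(n)|=2k+1\}=I_k\setminus I_{k-1}$, the first equality, the case $k=0$ using the convention $I_{-1}=\varnothing$.

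The two-dimensional statement is then immediate from the definition of the padding. Since padding preserves values and makes both rows of equal length, $|\rep_\Fcal(\bn)|=\max\{|\rep_\Fcal(n_1)|,|\rep_\Fcal(n_2)|\}$, so $|\rep_\Fcal(\bn)|\le 2k+1$ if and only if $|\rep_\Fcal(n_i)|\le 2k+1$ for both $i$, i.e.\ if and only if $\bn\in I_k^2$. Taking the difference with the analogous statement at level $k-1$ yields $\{\bn\in\Z^2:|\rep_\Fcal(\bn)|=2k+1\}=I_k^2\setminus I_{k-1}^2$, which is the second equality.

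The only genuine work sits in the one-dimensional equivalence, and more precisely in the index bookkeeping and the parity split between even- and odd-length Zeckendorf expansions in its backward direction; once parts~(4)--(5) and the existence construction are invoked with the correct indices, both directions are short. The passage to $\Z^2$ presents no difficulty beyond recording the maximum-length property of the padding, and the only points requiring attention are the boundary case $k=0$ and the convention $I_{-1}=\varnothing$.
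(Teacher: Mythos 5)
Your proof is correct and follows essentially the same route as the paper's: both establish that $I_k=\{n\in\Z\colon |\rep_\Fcal(n)|\le 2k+1\}$ (you via parts (4)--(5) of the preceding lemma together with the existence construction, the paper by exhibiting the extremal words $1(00)^k$ and $0(10)^k$) and then pass to exact lengths by taking set differences, reducing the two-dimensional case to the fact that $|\rep_\Fcal(\bn)|$ is the maximum of the componentwise lengths. Your write-up merely spells out the index bookkeeping that the paper leaves implicit.
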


\begin{proof}
    The first equality follows from the fact that
    $I_k=\{n\in\Z\colon |\rep_\mathcal{F}(n)| \leq 2k+1\}$
    where the minimal value $-F_{2k}$ is attained by the word $1(00)^k$ 
    and the maximal value $F_{2k+1}-1$ is attained by the word $0(10)^k$.
    The second equality follows from the fact that
    $I_k^2=\{\bn\in\Z^2\colon |\rep_\mathcal{F}(\bn)| \leq 2k+1\}$.
    \qed
\end{proof}

In Figure~\ref{fig:configuration}, the levels
    $I_{0}^2\setminus I_{-1}^2$,
    $I_{1}^2\setminus I_{0}^2$ and
    $I_{2}^2\setminus I_{1}^2$
are shown in yellow, green and blue respectively.

\section{An Automaton not only for Nonnegative Integers}
\label{sec:the-automaton-for-Z}

We introduce the terms based on \cite{zbMATH05707089}
to be used in this section.
Let $\sigma:A\mapsto A^*$ be a non-erasing morphism prolongable on the
letter $a\in A$ such that $x=(x_n)_{n\geq0}=\sigma^\omega(a)$ is infinite.
Let $C=\left\{0,...,\max_{b\in A}|\sigma(b)|-1\right\}$ be an alphabet. 
The deterministic finite automaton with output (DFAO)
associated to the morphism $\sigma$ and letter $a$ is
the 5-tuple,\footnote{In contrast to \cite{zbMATH05707089} we omit the coding as it is the identity map.}
$\mathcal{A}_{\sigma,a} = (A,C,\delta,a,A),$
where $\delta:A\times C \rightarrow A$ 
is a partial function 
such that $\delta(b,i)=c$ 
if and only if 
$c=u_i$ and $\sigma(b)=u_0\dots u_{|\sigma(b)|-1}$. 
Let $L$ be the language accepted by $ \mathcal{A}_{\sigma,a}$.
Then the triple $\Scal$ = ($L\setminus 0C^*$, $C$, $<$) is 
an abstract numeration system, 
where $(C,<)$ is the totally ordered alphabet 
with the natural order on $\N$ and the language $L\setminus 0C^*$ is radix 
ordered. 
Radix order $<_{rad}$ is defined for 
words $u,v\in L\setminus 0C^*$ as follows: $u<_{rad}v$ 
if and only if $|u| < |v|$ or $|u| = |v|$ and $u<_{lex}v$.
The map $\rep_{\mathcal{S}}:\N\rightarrow L\setminus 0C^*$
maps $n\in\N$ to the $(n+1)^{th}$ word in the 
language $L\setminus 0C^*$ and the map $\val_\Scal:L\rightarrow \N$ maps a word $w$ to the number $n$
such that $\rep_{\mathcal{S}}(n) = w'$, where $w = 0^pw'$ for a $p\geq0.$ 
For some state $r \in A$ and word $w\in C^*$, we denote by
$\Acal_{\sigma,a}(r,w)$ the state reached by the automaton after following the
path labeled by $w$ from the state $r$.
We denote it by $\Acal_{\sigma,a}(w)$ when $r$ is the initial state.
The following is essentially a reformulation of
\cite[Corollary 3.4.14]{zbMATH05707089}.

\begin{proposition}\label{decomposition}
	For every integer $n>0$,
	there exist integers $m\in\N$ and $\ell\in C$ such that
	$x_n = \sigma(x_m)[\ell]$ and $ \rep_\Scal(n) = \rep_\Scal(m)\cdot \ell$.
	Moreover, for any $i \in \N$, $|\rep_\Scal(n)| = i$ if and only if $|\sigma^{i-1}(a)| \leq n  < |\sigma^i(a)|$. 
\end{proposition}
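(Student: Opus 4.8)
The plan is to prove both assertions at once by building an explicit candidate for $\rep_\Scal$ out of the block factorization of the fixed point and then identifying it with the order-theoretic definition of $\rep_\Scal$. Since $\sigma$ is prolongable on $a$ we have $x=\sigma(x)$, giving the factorization $x=\sigma(x_0)\sigma(x_1)\sigma(x_2)\cdots$ into consecutive blocks. Hence for every $n>0$ there is a unique pair $(m,\ell)$ with $0\le\ell<|\sigma(x_m)|$ and $n=\ell+\sum_{j<m}|\sigma(x_j)|$, and for it $x_n=\sigma(x_m)[\ell]$, which is already the first displayed identity. Because $|\sigma(a)|\ge 2$ (otherwise the fixed point would be finite) one checks $m<n$, so I may define recursively $r(0)=\varepsilon$ and $r(n)=r(m)\cdot\ell$, and reduce the proposition to the claim $r=\rep_\Scal$.

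First I would verify, by induction on $n$, that $r(n)\in L\setminus 0C^*$ and that the path labelled $r(n)$ from the initial state ends at the state $x_n$, that is $\Acal_{\sigma,a}(r(n))=x_n$. The base case is $\Acal_{\sigma,a}(\varepsilon)=a=x_0$. For the step, the hypothesis puts the path of $r(m)$ at $x_m$, and as $\ell<|\sigma(x_m)|$ the transition $\delta(x_m,\ell)=\sigma(x_m)[\ell]=x_n$ is defined, so $r(m)\cdot\ell$ labels a valid path ending at $x_n$; it has no leading $0$ since either $m=0$ with $\ell\ge 1$, or $m>0$ and $r(m)$ already avoids one. As a by-product this gives the automaton-output identity $\Acal_{\sigma,a}(\rep_\Scal(n))=x_n$ once $r=\rep_\Scal$ is established.

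It then remains to show that $r\colon\N\to L\setminus 0C^*$ is a strictly increasing bijection for the radix order, for $\rep_\Scal$ is by definition the unique order isomorphism onto $(L\setminus 0C^*,<_{rad})$. Strict monotonicity is an induction on $\max(n,n')$: for $n<n'$ with block data $(m,\ell)$ and $(m',\ell')$, contiguity of the blocks forces $m\le m'$; if $m=m'$ then $\ell<\ell'$ and $r(m)\cdot\ell<_{lex}r(m)\cdot\ell'$ at equal length, while if $m<m'$ the hypothesis gives $r(m)<_{rad}r(m')$, hence $|r(m)|\le|r(m')|$, and a short split on strictness of this inequality yields $r(n)<_{rad}r(n')$. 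For surjectivity I would count by length: as the automaton is deterministic, the length-$i$ words of $L$ label exactly the length-$i$ paths from $a$, whose number obeys the recurrence $|\sigma^i(a)|=\sum_b|\sigma(b)|$ over the letters $b$ of $\sigma^{i-1}(a)$ and thus equals $|\sigma^i(a)|$; those beginning with $0$ biject with the length-$(i-1)$ words of $L$ (reading $0$ loops at $a=x_0$), so $L\setminus 0C^*$ has $|\sigma^i(a)|-|\sigma^{i-1}(a)|$ words of length $i$, and the sum $\sum_{j=0}^{i}(|\sigma^j(a)|-|\sigma^{j-1}(a)|)$ telescopes to $|\sigma^i(a)|$ (with $|\sigma^{-1}(a)|:=0$). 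A companion induction shows $|r(n)|\le i$ whenever $n<|\sigma^i(a)|$, using that $|\sigma^i(a)|$ is precisely the total length of the first $|\sigma^{i-1}(a)|$ blocks; thus $r$ sends the $|\sigma^i(a)|$ integers below $|\sigma^i(a)|$ injectively into the equally many words of length $\le i$, forcing bijectivity at every level. This yields $r=\rep_\Scal$, hence $\rep_\Scal(n)=\rep_\Scal(m)\cdot\ell$, and simultaneously the level count $|\rep_\Scal(n)|=i\iff|\sigma^{i-1}(a)|\le n<|\sigma^i(a)|$.

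The main obstacle I anticipate is the monotonicity-and-length bookkeeping: because radix order compares lengths first, I must ensure the recursion for $r$ never emits a word that is longer yet radix-smaller, and it is exactly the telescoping count that makes the lengths land where claimed. By contrast the identity $x_n=\sigma(x_m)[\ell]$ is routine from the fixed-point factorization and is the Dumont--Thomas phenomenon behind \cite[Corollary 3.4.14]{zbMATH05707089}; so a shorter route is to cite that corollary for the first assertion and supply only the telescoping argument for the ``moreover'' clause.
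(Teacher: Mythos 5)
Your proposal is correct, but it takes a genuinely different route from the paper. The paper's proof goes in the opposite direction: it starts from $\rep_\Scal(n)=w\ell$, sets $m=\val_\Scal(w)$ (noting $w\in L\setminus 0C^*$ so that $\rep_\Scal(m)=w$), and then invokes \cite[Corollary 3.4.14]{zbMATH05707089} — the Dumont--Thomas identity $\sigma(x_{\val_\Scal(w)})=x_{\val_\Scal(w0)}\cdots x_{\val_\Scal(w(K-1))}$ — to read off $x_n=\sigma(x_m)[\ell]$, and cites equation (3.12) of the same reference for the ``moreover'' clause. You instead work forwards from the block factorization $x=\sigma(x_0)\sigma(x_1)\cdots$, define the candidate representation $r(n)=r(m)\cdot\ell$ recursively, and prove $r=\rep_\Scal$ by showing $r$ is a radix-order isomorphism onto $L\setminus 0C^*$ via the monotonicity argument and the telescoping count $\sum_{j\le i}(|\sigma^j(a)|-|\sigma^{j-1}(a)|)=|\sigma^i(a)|$. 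In effect you reprove the cited corollary from first principles; the details you give (well-foundedness from $m<n$, the no-leading-zero check, the path-counting recurrence, and the injection of $[0,|\sigma^i(a)|)$ into the equally many words of length at most $i$) all hold up. What your approach buys is self-containedness and a simultaneous, uniform derivation of both assertions of the proposition; what the paper's approach buys is brevity, since both claims are one-line consequences of results already established in \cite{zbMATH05707089}. You correctly identify this trade-off yourself in your closing remark, and the ``shorter route'' you describe there is exactly the paper's proof.
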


\begin{proof}
    Let $n\in\N$. Let $u\in L \setminus 0C^*$ 
    such that $\rep_\Scal(n)=u$.
    Let $w\in C^*$ and $\ell\in C$ such that $u=w\ell$.
    We have $\val_\Scal(w\ell)=n$.
    Let $m=\val_\Scal(w)$.
    Since $w\in L \setminus 0C^*$, then $\rep_\Scal(m)=w$.
    From \cite[Corollary 3.4.14]{zbMATH05707089},
    we have
    \begin{align*}
        \sigma(x_{\val_\Scal(w)})
        =   x_{\val_\Scal(w0)}
            x_{\val_\Scal(w1)}
            \cdots
            x_{\val_\Scal(w\ell)}
            \cdots
            x_{\val_\Scal(w\cdot (K-1))}
    \end{align*}
    where
    $K=|\sigma(x_{\val_\Scal(w)})|$.
    Thus $x_n=x_{\val_\Scal(w\ell)}
    = \sigma(x_{\val_\Scal(w)})[\ell]
    =\sigma(x_m)[\ell]$.
    The other statement follows from the equation \cite[(3.12)]{zbMATH05707089}. \qed
\end{proof}

Let $\varphi$ be the morphism $\varphi:a\mapsto ab, b\mapsto a$. 
The automaton $\Acal_{\varphi^2,a}$ associated to the right-infinite fixed point of $\varphi^2$
starting with letter $a$ is shown in Figure~\ref{fig:automaton-fibo}.
We construct another automaton $\Acal_{\varphi^2,s}$
associated to the bi-infinite fixed point of $\varphi^2$
defined from the seed $s=b.a$, see Figure~\ref{fig:automaton-fibo}.
The bi-infinite Fibonacci word is
$x=\lim_{k \rightarrow +\infty}\varphi^{2k}(b.a)$, where the dot
represents the origin between positions $-1$ and $0$.
When referring to $\varphi^{2k}(b.)$ we mean the finite
word $\varphi^{2k}(b)=x_{-\mid \varphi^{2k}(b) \mid} \dots x_{-2} x_{-1}$. 

    \begin{figure}[h]
        \begin{center}
			\begin{tikzpicture}[auto]
				\begin{scope}[xshift=-.1cm]
				\node[draw,circle] (A) at (0,0) {$a$};
				\node[draw,circle] (B) at (1.5,0) {$b$};
				\draw[bend left,->] (A) to node {$1$} (B);
				\draw[bend left,->] (B) to node {$0$} (A);
				\draw[loop left,->] (A) to node {$0,2$} (A);
				\draw[loop right,->] (B) to node {$1$} (B);
				\draw[<-] (A) -- ++ (0,1) node[above] {};
				\draw[->] (A) -- ++ (0,-0.5);
				\draw[->] (B) -- ++ (0,-0.5);
				\end{scope}
                \begin{scope}[xshift=4.5cm]
				\node[draw] (S) at (.75,1.2) {$\textsc{START}$};
				\node[draw,circle] (A) at (0,0) {$a$};
				\node[draw,circle] (B) at (1.5,0) {$b$};
				\draw[bend left=20,->] (A) to node {$01$} (B);
				\draw[bend left=20,->] (B) to node {$00$} (A);
                \draw[bend right,->] (S) to node[swap] {$0$} (A);
				\draw[bend left,->] (S) to node {$1$} (B);
				\draw[<-] (S) -- ++ (0,.5) node[above] {};
				\draw[loop left,->] (A) to node {$00,10$} (A);
				\draw[loop right,->] (B) to node {$01$} (B);
				\draw[->] (A) -- ++ (0,-0.5);
				\draw[->] (B) -- ++ (0,-0.5);
				\end{scope}
				\begin{scope}[xshift=8.5cm]
				\node[draw] (S) at (.75,1.2) {$\textsc{START}$};
				\node[draw,circle] (A) at (0,0) {$a$};
				\node[draw,circle] (B) at (1.5,0) {$b$};
				\draw[bend left=20,->] (A) to node {$1$} (B);
				\draw[bend left=20,->] (B) to node {$0$} (A);
                \draw[bend right,->] (S) to node[swap] {$0$} (A);
				\draw[bend left,->] (S) to node {$1$} (B);
				\draw[<-] (S) -- ++ (0,.5) node[above] {};
				\draw[loop left,->] (A) to node {$0$} (A);
				\draw[->] (A) -- ++ (0,-0.5);
				\draw[->] (B) -- ++ (0,-0.5);
				\end{scope}
			\end{tikzpicture}
        \end{center}
			\caption{Automata $\Acal_{\varphi^2,a}$,
                               $\Acal_{\varphi^2,s}$ and 
                               $\Acal_{\varphi,s}$ with seed $s=b.a$.}
			\label{fig:automaton-fibo}
	\end{figure}

\begin{lemma}\label{lem:both-signs-inductive-step-representation}
	Let $x = \lim_{k \rightarrow +\infty}\varphi^{2k}(b.a)$ and let $n\in\Z	\setminus I_0$.
	Then there exist integers $m\in\Z$ and $ 0\leq \ell < 3$ such that
	$$ x[n] = \varphi^2(x[m])[\ell], \text{ with } \rep_\mathcal{F}(n) = \rep_\mathcal{F}(m)h(\ell),$$
    where $h$ is the morphism
	$h: \left\{0,1,2\right\}^*\rightarrow\left\{0,1\right\}^*$
	defined as $h: 0\mapsto 00, 1\mapsto 01, 2\mapsto 10$.
    Moreover, if $n \in I_i\setminus I_{i-1}$ for some $1\leq i \leq k,$ then $m \in I_{i-1}\setminus I_{i-2}.$
\end{lemma}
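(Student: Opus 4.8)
The plan is to read off $\rep_\Fcal(n)$ from the $\varphi^2$-block decomposition of the bi-infinite fixed point. First I would record that $\varphi^2\colon a\mapsto aba,\ b\mapsto ab$, so $|\varphi^2(a)|=3$, $|\varphi^2(b)|=2$, and the output alphabet is exactly $\{0,1,2\}$ matched by $h$. Since $x=\varphi^2(x)$ with origin fixed at $b.a$, the positive half $x_0x_1\cdots$ and the negative half $\cdots x_{-2}x_{-1}$ are each the concatenation of the blocks $\varphi^2(x_m)$; hence every $n\in\Z$ lies in a unique block, which gives at once the existence of $m\in\Z$ and $\ell\in\{0,1,2\}$ with $x[n]=\varphi^2(x[m])[\ell]$, where $m$ is the block index and $\ell<|\varphi^2(x_m)|$ the offset. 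It then remains to prove the representation identity $\rep_\Fcal(n)=\rep_\Fcal(m)h(\ell)$, and I would do this by induction on the level $k$ with $n\in I_k\setminus I_{k-1}$.

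For the identity I would use uniqueness of canonical $\Fcal$-representations. Writing $P(m)=\val_\Fcal(\rep_\Fcal(m)00)$, a direct computation from the definition of $\val_\Fcal$ shows that appending $00,01,10$ raises the value by $0,1,2$, so $\val_\Fcal(\rep_\Fcal(m)h(\ell))=P(m)+\ell$. Thus it suffices to prove (a) that $\rep_\Fcal(m)h(\ell)$ is canonical and (b) that $P(m)$ is the left endpoint of block $m$: then $\val_\Fcal(\rep_\Fcal(m)h(\ell))=P(m)+\ell=n$ and uniqueness closes the argument. For (a) the only danger is an $11$ at the junction, which needs $\rep_\Fcal(m)$ to end in $1$ together with $\ell=2$; but $\rep_\Fcal(m)$ ends in $1$ exactly when its last pair is $h(1)=01$, and by the lemma applied to $m$ (legitimate since $m\in I_{k-1}\setminus I_{k-2}$ has smaller level) this means $m$ is at offset $1$ of its own block, where $\varphi^2(a)[1]=\varphi^2(b)[1]=b$ forces $x_m=b$ and $|\varphi^2(x_m)|=2$; hence $\ell=2$ is impossible. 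The short degenerate cases $m\in\{0,-1\}$ are handled directly, and are harmless because $n\notin I_0$.

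For (b) I would split on the sign of $m$. For $m\ge 0$, Proposition~\ref{decomposition} with $\sigma=\varphi^2$ identifies the start of block $m$ with $\val_\Scal(\rep_\Scal(m)\,0)$, and the translation $\rep_\Fcal(p)=0\,h(\rep_\Scal(p))$ between the two systems (equivalently $\val_\Fcal(0\,h(u))=\val_\Scal(u)$) rewrites this as $P(m)$. For $m\le -2$ the canonical word is $\rep_\Fcal(m)=100w$ of length $2j+1$, and the identity $\val_\Fcal(100w)=\val_\Fcal(000w)-F_{2k+2}$ of the first lemma reduces $P(m)$ to the positive quantity $P(m^+)$ with $m^+=m+F_{2j}\in[0,F_{2j-1})$, leaving the geometric identity $|\varphi^2(x[m,m^+))|=F_{2j+2}$ to check. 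Using $|\varphi^2(u)|=2|u|+|u|_a$, this is equivalent to $|x[m^+-F_{2j},m^+)|_a=F_{2j-1}$, which I would obtain by comparing the window at $m^+$ with the window at $m^+=0$ (namely $\varphi^{2j}(b)$, whose number of letters $a$ is $|\varphi^{2j-1}(a)|_a=F_{2j-1}$) and invoking $x_i=x_{i-F_{2j}}$ for $0\le i<F_{2j-1}$. This periodicity is itself clean: both $x[0,F_{2j-1})$ and the length-$F_{2j-1}$ prefix of $\varphi^{2j}(b)=\varphi^{2j-1}(a)=\varphi^{2j-2}(a)\varphi^{2j-2}(b)$ equal $\varphi^{2j-2}(a)$, so the gained and lost letters match and the count is preserved.

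Finally, the \textit{moreover} clause is immediate: $\rep_\Fcal(n)=\rep_\Fcal(m)h(\ell)$ forces $|\rep_\Fcal(n)|=|\rep_\Fcal(m)|+2$, so Lemma~\ref{lem:level} turns $n\in I_i\setminus I_{i-1}$ into $m\in I_{i-1}\setminus I_{i-2}$. I expect the main obstacle to be part (b) for negative $m$, that is, matching the $\val_\Fcal$-boundary $P(m)$ with the actual block boundary on the negative half; this rests on the window-count identity and the small balance/periodicity fact about the Fibonacci word above, whereas the positive half, the value-difference computation, and the junction check are comparatively routine.
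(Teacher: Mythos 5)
Your proof is correct, but it takes a genuinely different route from the paper's. The paper never isolates the ``left endpoint of block $m$'' as a quantity: for $n>0$ it works at the level of representations, extracting $\rep_\Fcal(n)=\rep_\Fcal(m)h(\ell)$ from the abstract numeration system attached to $\Acal_{\varphi^2,a}$ via Proposition~\ref{decomposition} and the dictionary $\rep_\Fcal(n)=0\,h(\rep_\Gcal(n))$; for $n<0$ it shifts the \emph{position} by $F_{2k}$ using $\varphi^{2k}(b.)[n]=\varphi^{2k-1}(a)[n+F_{2k}]$, applies the positive case to $n+F_{2k}$, and then tracks by hand how the canonical words transform under this shift. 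You instead fix $(m,\ell)$ once and for all from the $\varphi^2$-block decomposition, compute $\val_\Fcal(\rep_\Fcal(m)h(\ell))=P(m)+\ell$ with $P(m)=\val_\Fcal(\rep_\Fcal(m)00)$, and close by uniqueness of canonical representations; your negative case shifts the \emph{block index} by $F_{2j}$ and replaces the paper's string bookkeeping by the geometric identity $|\varphi^2(x[m,m^+))|=F_{2j+2}$, proved by counting $a$'s together with the local periodicity $x_i=x_{i-F_{2j}}$ for $0\le i<F_{2j-1}$. Your scheme localizes the string manipulation into one clean value computation and makes the only canonicality obstruction (a possible $11$ at the junction, ruled out because $\rep_\Fcal(m)$ ending in $1$ forces $x_m=b$ and hence $\ell\le 1$) explicit, something the paper gets for free by staying inside the automaton's language; the price is the extra letter-counting lemma, which you justify correctly. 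One point you should make explicit: in the junction check you apply the lemma to $m$ at level $k-1$ before the ``moreover'' clause is available, so justify $m\in I_{k-1}\setminus I_{k-2}$ directly from the block structure (since $|\varphi^{2k}(a)|=F_{2k+1}$ and $|\varphi^{2k}(b)|=F_{2k}$, the blocks indexed by $I_{k-1}$ tile exactly the positions $I_k$) rather than from the representation identity you are in the middle of proving; with that said, the argument is not circular and the induction is well founded.
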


\begin{proof}
    First assume that $n>0$.
    Let $\Acal_{\varphi^2,a}$ and $\Gcal=(L\setminus 0C^*,C,<)$. Then $x =
    \lim_{k \rightarrow +\infty}\varphi^{2k}(a)$ is $\Gcal$-automatic, i.e., for
    any $n\in\N,$ we have $x_n = \Acal_{\varphi^2,a}(\rep_{\Gcal}(n)),$
	where 
	$\rep_{\Gcal}(n) \in \{\varepsilon, 1, 2, 10, 11, 20, 21, 22, 100, 101, \dots\}$.
    From the definition of $\Acal_{\varphi^2,s}$
    in Figure~\ref{fig:automaton-fibo}, we have
    $x_n = \Acal_{\varphi^2,s}(0h(\rep_{\Gcal}(n)))$.
    Moreover, from Proposition \ref{decomposition}, 
	for $n>0$ there exist 
	integers $m\in\N$ and $ \ell$ such that $h(\rep_\Gcal(n)) = h(\rep_{\Gcal}(m))h(l).$
	On the other hand, $0h(\rep_\Gcal(n)) = \rep_\mathcal{F}(n)$
	for all $n \in \N.$ This follows from the following reasons. 
    Applying Proposition \ref{decomposition}, we obtain that
	$ |0h(\rep_\Gcal(n))| = 2i+1 \text{ if and only if } n \in 
    (I_i\setminus I_{i-1})\cap\N$. The alphabet $h(C)$ 
	has the same ordering as the alphabet $C$. Finally, a word $12$ is not accepted by the automaton $\mathcal{A}_{\varphi^2,a}$ and therefore a word $11$ is forbidden in $0h(L)$.
    Moreover, as $|h(\ell)|=2$, we observe $|\rep_{\mathcal{F}}(m)|=|\rep_{\mathcal{F}}(n)|-2$, 
    thus $m \in I_{i-1}\setminus I_{i-2}.$
	
%

    If $n = -2,$ then we denote $\ell = 0, m = -1.$ Let $n < -2$ and let
    $k\geq 2$ be such that $n \in I_k\setminus I_{k-1}.$
    We have
    \begin{equation}\label{R}
    	\varphi^{2k}(b.)[n]=\varphi^{2k-1}(a)[n+|\varphi^{2k}(b)|],
    \end{equation}
    where $|\varphi^{2k}(b)|=F_{2k}.$ As $\varphi^{2k-1}(a)$ is a prefix of $\varphi^{2k}(a),$ we can write
    $$  \varphi^{2k-1}(a)[n+F_{2k}]=\varphi^{2k}(a)[n+F_{2k}].$$
    Then $0\leq n+F_{2k} < F_{2k-1}$ and we denote $0 \leq i \leq k-1$ such that $n+F_{2k} \in I_i\setminus I_{i-1}.$ In case that $i > 0$,
    we use the previous paragraph for positive $n>0$ 
    on a long enough prefix $z$ of $x$ ($z = \varphi^{2k-2}(a)$, therefore $n+F_{2k}<|z|=F_{2k-1}$)
    and we find $m_P \in I_{i-1}\setminus I_{i-2}$ and $\ell$ such that $\rep_\Fcal(n+F_{2k})=\rep_\Fcal(m_P)h(\ell)$ and 
    $$ \varphi^2(\varphi^{2k-2}(a))[n+F_{2k}]
    = \varphi^2(\varphi^{2k-2}(a)[m_P])[\ell].$$
    As $0 \leq m_P < F_{2k-3}<F_{2k-1} = |\varphi^{2k-2}(a)|$,
    we restrict the last 
    relation just to a prefix $\varphi^{2k-3}(a)$
    and use the relation \eqref{R}
    again to get $m=m_P-F_{2k-2} < 0$
    $$ \varphi^{2k}(b.)[n] = \varphi^2(\varphi^{2k-3}(a)[m_P])[\ell] = \varphi^2(\varphi^{2k-2}(b.)[m_P-F_{2k-2}])[\ell].$$
    The representation $\rep_{\mathcal{F}}(n) = 10w$ has the first digit
    corresponding to $-F_{2k}.$ Then $(00)^{k-i}\rep_\Fcal(n+F_{2k})=00w.$
    As $m_P\in I_{i-1}\setminus I_{i-2}$, then $m_P-F_{2k-2}\in I_{k-1}\setminus I_{k-2}$
    and
    $\rep_\Fcal(m_P-F_{2k-2})=10(00)^{k-i-1}\rep_\Fcal(m_P)$. As a whole,
    $$\rep_\mathcal{F}(n) = 10(00)^{k-i-1}\rep_{\mathcal{F}}(m_P)h(\ell)=
    \rep_{\mathcal{F}}(m_P - F_{2k-2})h(\ell) = \rep_{\mathcal{F}}(m)h(\ell).$$
    If $i=0,$ then we denote $\ell = 0, m = -F_{2k-2}$ and the statement holds true. 
    \qed
\end{proof}

We show the following result.
\begin{proposition}\label{prop:dfao-for-two-sided-fibonacci}
    The DFAO $\Acal_{\varphi,s}$ associated to the seed $s=b.a$ satisfies
	$$ x_n = \Acal_{\varphi,s}(\rep_\Fcal(n)) \text{ for all } n\in\Z.$$
\end{proposition}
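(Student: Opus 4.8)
The plan is to prove the identity by induction on the \emph{level} $k$ for which $n\in I_k\setminus I_{k-1}$, using Lemma~\ref{lem:both-signs-inductive-step-representation} to descend exactly one level at each step, together with a direct verification that a single application of $\varphi$, read two binary letters at a time, reproduces the action of $\varphi^2$. Since the hard content — in particular the negative range, obtained through the reflection $\varphi^{2k}(b.)$ — is already packaged inside Lemma~\ref{lem:both-signs-inductive-step-representation}, the remaining work is mostly bookkeeping in the automaton $\Acal_{\varphi,s}$.

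First I would establish the elementary \textbf{pair lemma}: for every state $r\in\{a,b\}$ and every $\ell\in\{0,1,2\}$ with $\ell<|\varphi^2(r)|$, following the two-letter word $h(\ell)$ from $r$ in $\Acal_{\varphi,s}$ reaches the state $\varphi^2(r)[\ell]$, that is,
\[
    \Acal_{\varphi,s}(r,h(\ell))=\varphi^2(r)[\ell].
\]
This is a finite check over the five admissible pairs $(r,\ell)$, read off directly from the transitions of $\Acal_{\varphi,s}$ in Figure~\ref{fig:automaton-fibo}; for instance, from $a$ the block $h(1)=01$ traverses $a\xrightarrow{0}a\xrightarrow{1}b$, matching $\varphi^2(a)[1]=aba[1]=b$, and from $b$ the block $h(0)=00$ traverses $b\xrightarrow{0}a\xrightarrow{0}a$, matching $\varphi^2(b)[0]=ab[0]=a$. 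Conceptually this is just the statement that $\Acal_{\varphi,s}$ read in blocks of two coincides with the DFAO $\Acal_{\varphi^2,a}$, whose defining property is precisely $\delta_{\varphi^2,a}(r,\ell)=\varphi^2(r)[\ell]$.

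Next comes the induction. For the base case, Lemma~\ref{lem:level} gives $I_0=\{-1,0\}$ with $\rep_\Fcal(0)=0$ and $\rep_\Fcal(-1)=1$; since the START transitions send $0$ to $a$ and $1$ to $b$, we get $\Acal_{\varphi,s}(\rep_\Fcal(0))=a=x_0$ and $\Acal_{\varphi,s}(\rep_\Fcal(-1))=b=x_{-1}$. For the inductive step, take $n\in I_k\setminus I_{k-1}$ with $k\geq1$. By Lemma~\ref{lem:both-signs-inductive-step-representation} there exist $m$ and $\ell$ with $\rep_\Fcal(n)=\rep_\Fcal(m)\,h(\ell)$, with $x_n=\varphi^2(x_m)[\ell]$, and with $m$ lying strictly one level below $n$. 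As $\rep_\Fcal(m)$ is a prefix of $\rep_\Fcal(n)$ it carries the same leading sign digit and is read from START exactly as in the computation of $\Acal_{\varphi,s}(\rep_\Fcal(m))$, so by concatenation of paths
\[
    \Acal_{\varphi,s}(\rep_\Fcal(n))
    =\Acal_{\varphi,s}\bigl(\Acal_{\varphi,s}(\rep_\Fcal(m)),\,h(\ell)\bigr)
    =\Acal_{\varphi,s}(x_m,h(\ell))
    =\varphi^2(x_m)[\ell]
    =x_n,
\]
where the second equality is the induction hypothesis $\Acal_{\varphi,s}(\rep_\Fcal(m))=x_m$ and the third is the pair lemma. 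Because $m$ sits strictly below $n$ in the level filtration, the recursion terminates, completing the induction over all $n\in\Z$.

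The main obstacle is not analytic but organizational: one must check that, after the START state consumes the single sign digit, the remaining even-length suffix is genuinely parsed in the blocks $h(\ell)$ governed by the pair lemma, so that the state reached at each intermediate prefix $\rep_\Fcal(m)$ is an honest letter $x_m\in\{a,b\}$ of the bi-infinite word, and the odd-position intermediate states inside a block never affect the final output. Verifying that this block structure is compatible with the decomposition $\rep_\Fcal(n)=\rep_\Fcal(m)\,h(\ell)$ supplied by Lemma~\ref{lem:both-signs-inductive-step-representation}, uniformly across the nonnegative and negative cases, is the one point that requires care.
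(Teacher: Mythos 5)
Your proposal is correct and follows essentially the same route as the paper: both argue by induction on the level $I_k\setminus I_{k-1}$, use Lemma~\ref{lem:both-signs-inductive-step-representation} for the descent $\rep_\Fcal(n)=\rep_\Fcal(m)\,h(\ell)$ with $x_n=\varphi^2(x_m)[\ell]$, and close the inductive step with the observation that reading the two-letter block $h(\ell)$ in $\Acal_{\varphi,s}$ realizes $\varphi^2(\cdot)[\ell]$ (your ``pair lemma'' is exactly the paper's identity $\varphi^2(c)[\ell]=\varphi(\varphi(c)[\ell_0])[\ell_1]$ for $h(\ell)=\ell_0\ell_1$, which you verify by a correct finite check of the five admissible pairs). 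No gaps.
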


\begin{proof}
	Let $\Acal_{\varphi,s}$ be the automaton shown in 
	Figure~\ref{fig:automaton-fibo}, i.e., the DFAO $\Acal_{\varphi,s} = (\left\{a,b\right\}\cup\left\{\textsc{start}\right\},\left\{0,1\right\},
	\delta, \textsc{start}, \left\{a,b\right\})$ with the partial function 
	$\delta$ such that 
	\begin{itemize}
		\item $\delta(\textsc{start},\rep_\Fcal(n)) = s_n$ 
		for every $n\in I_0=\{-1,0\}$, where $s_{-1}=b$, $s_{0}=a,$
		\item $\delta(c,i)=d$ for any $c,d\in\{a,b\}$ if and only if $\varphi(c)=u$ and $u_i=d.$ 
	\end{itemize}
	Assume $n\in I_0$. If $n=0$, then we have 
	$x_0 = a = \Acal_{\varphi,s}(0) = \Acal_{\varphi,s}(\rep_\Fcal(0)) $. If $n = -1$, then $x_{-1} = b = \Acal_{\varphi,s}(1) = \Acal_{\varphi,s}(\rep_\Fcal(-1))$.
	Induction hypothesis: we assume for some $k\in \N$ 
	that $x_m = \Acal_{\varphi,s}(\rep_\Fcal(m))$
	for all $m\in I_k\setminus I_{k-1}$. Let $n\in I_{k+1}\setminus I_k$.
	Then from Lemma \ref{lem:both-signs-inductive-step-representation}
	there exist $m\in I_k\setminus I_{k-1}$ and $\ell\in\{0,1,2\}$ such that
	$x_n = \varphi^2(x_m)[\ell]$ and $\rep_\Fcal(n)=\rep_\Fcal(m)\cdot h(\ell)$,
	where $h(\ell)=\ell_0\cdot\ell_1$ for some $\ell_0,\ell_1	\in \{0,1\}$. 
	From the induction hypothesis, we have
	\begin{align*}
		x_n &= \varphi^2(x_m)[\ell] = \varphi^2(\Acal_{\varphi,s}(\rep_\Fcal(m)))[\ell] =  \varphi(\varphi(\Acal_{\varphi,s}(\rep_\Fcal(m)))[\ell_0])[\ell_1]\\
		&= \Acal_{\varphi,s}(\rep_\Fcal(m)\cdot \ell_0\cdot\ell_1) = \Acal_{\varphi,s}(\rep_\Fcal(m) h(\ell)) = \Acal_{\varphi,s}(\rep_\Fcal(n)).\qquad \qed 
	\end{align*}
\end{proof}

\section{Two-dimensional Words, Languages and Morphisms}
\label{sec:two-dim-words}

In this section, we introduce 2-dimensional words, languages and morphisms following the notations of
\cite{zbMATH05701985,MR4226493}.
Let $k\in\N$ and $\Acal = \{0, 1, \dots, k\}$ be a finite alphabet
and let $u: \left\{ 0, \dots, n_1 - 1\right\}\times\left\{ 0, \dots, n_2 - 1\right\}
\to \mathcal{A}$ be a 2-dimensional word of shape 
$\bn = (n_1, n_2) \in \mathbb{N}^2.$ 
Let $\mathcal{A}^\bn$ denote the set of all 2-dimensional words
of shape $\bn$. We refer to the words of shape (1,2), (2,1) as to the vertical,
horizontal dominoes, respectively. 
	We represent a $2$-dimensional word $u$ of shape $(n_1, n_2)\in\mathbb{N}^2$
	as a matrix	with Cartesian coordinates:
	$$ u = \begin{pmatrix}
	u_{0,n_2-1} & \dots & u_{n_1-1, n_2-1} \\
	\dots & \dots & \dots \\
	u_{0,0} & \dots & u_{n_1-1, 0}
	\end{pmatrix}.$$


	Let ${\mathcal{A}^*}^2 = \bigcup_{\textbf{n}\in\mathbb{N}^2} \mathcal{A}^\textbf{n}$ the set of all 2-dimensional words. Let $u$, $v \in {\Acal^*}^2$ be of shape 
	$(n_1, n_2)$, $(\tilde{n}_1, \tilde{n}_2)$,
	respectively. If $n_2 = \tilde{n}_2$, the concatenation in direction $\be_1$
	is defined as a 2-dimensional word $u \odot^1 v$ of shape $(n_1 + \tilde{n}_1, n_2)$ given as
	$$ u \odot^1 v = \begin{pmatrix}
		u_{0,n_2-1} & \dots & u_{n_1-1, n_2-1} & v_{0,n_2-1} & \dots\dots & v_{\tilde{n_1}-1, n_2-1}\\
		\dots & \dots & \dots & \dots & \dots & \dots\\
		u_{0,0} & \dots & u_{n_1-1, 0} & v_{0,0} & \dots\dots & v_{\tilde{n}_1-1, 0}
	\end{pmatrix}.$$
	If $n_1 = \tilde{n}_1$, the concatenation in 
	direction $\be_2$ is defined analogically.
	A word $v \in {\Acal^*}^2$ is a \emph{subword} of a word $u\in {\Acal^*}^2$ if
	there exist words $u_1, u_2, u_3, u_4\in {\Acal^*}^2$ such that
	$u = u_3 \odot^2 (u_1 \odot^1 v \odot^1 u_2) \odot^2 u_4$.
%

A subset $L \subseteq {\mathcal{A}^*}^2$
is called a 2-dimensional \emph{factorial} language 
if $u\in L$ implies that $v\in L$ for all $2$-dimensional subwords $v$ of $u$.

Let $\Acal$ and $\Bcal$ be two alphabets.
Let $L\subseteq\Acal^{*^2}$ be a factorial language.
A function $\omega:L\to\Bcal^{*^2}$ is a \emph{$2$-dimensional
morphism} if for every
$i$ with $1\leq i\leq 2$,
and every $u,v\in L$ such that 
$u\odot^i v$ is defined
and
$u\odot^i v\in L$,
we have
that the concatenation $\omega(u)\odot^i \omega(v)$
in direction $\be_i$ is defined and
\begin{equation*}
    \omega(u\odot^i v) = \omega(u)\odot^i \omega(v).
\end{equation*}

A $2$-dimensional morphism $L\to\Bcal^{*^2}$ is thus completely defined from the
image of the letters in $\Acal$ and can be denoted as a rule $\Acal\to\Bcal^{*^2}.$

A subset $X\subseteq\Acal^{\Z^2}$ is called a \emph{subshift}
if it is closed under the shift\footnote{Note that from now on, $\sigma$ denotes the shift action and not a morphism.} $\sigma$ and closed with respect to
compact product topology. Let $L\subseteq\Acal^{*^2}$ be a factorial language
and $\Lcal(x)$ be the factorial language containing
all subwords of the configuration $x\in\Acal^{\Z^2}$.
Then, $\Xcal_L=\{x\in\Acal^{\Z^2}\mid \Lcal(x)\subset L\}$ is a subshift 
generated by $L$.
A $2$-dimensional morphism 
$\omega:L \to\Bcal^{*^2}$ 
can be extended to a continuous map 
$\omega:\Xcal_L\to\Bcal^{\Z^2}$
in such a way that the origin of $\omega(x)$ is at zero position
in the word $\omega(x_{(0,0)})$
for all $x\in\Xcal_L$.

In general, the closure under the shift of the image of a subshift
$X\subseteq\Acal^{\Z^2}$ under $\omega$
is the subshift $\overline{\omega(X)}^{\sigma}
= \{\sigma^\bk\omega(x)\in\Bcal^{\Z^2} \mid \bk\in\Z^2, x\in X\}
\subseteq \Bcal^{\Z^2}$.

A 2-dimensional morphism $\omega:\Acal\to\Acal^{*^2}$ is said \emph{expansive} if
the width and height of $\omega^k(a)$ goes to $\infty$ for all letters $a\in\Acal$.
A subshift $X\subset\Acal^{\Z^2}$ is \emph{self-similar} if there exists
an expansive 2-dimensional morphism $\Acal\to\Acal^{*^2}$ such that 
$X=\overline{\omega(X)}^{\sigma}$.

\section{Self-similarity of the Wang Shift $\Omega_\Zcal$}
\label{sec:self-similarity}


\newcommand\subgammaO{\begin{array}{llll}
0\mapsto \left(7\right)
,&
1\mapsto \left(8\right)
,&
2\mapsto \left(10\right)
,&
3\mapsto \left(11\right)
,\\
4\mapsto \left(12\right)
,&
5\mapsto \left(13\right)
,&
6\mapsto \left(14\right)
,&
7\mapsto \left(\begin{array}{r}
0 \\
7
\end{array}\right)
,\\
8\mapsto \left(\begin{array}{r}
1 \\
8
\end{array}\right)
,&
9\mapsto \left(\begin{array}{r}
1 \\
9
\end{array}\right)
,&
10\mapsto \left(\begin{array}{r}
1 \\
10
\end{array}\right)
,&
11\mapsto \left(\begin{array}{r}
4 \\
11
\end{array}\right)
,\\
12\mapsto \left(\begin{array}{r}
6 \\
11
\end{array}\right)
,&
13\mapsto \left(\begin{array}{r}
2 \\
12
\end{array}\right)
,&
14\mapsto \left(\begin{array}{r}
6 \\
12
\end{array}\right)
,&
15\mapsto \left(\begin{array}{r}
3 \\
13
\end{array}\right)
,\\
16\mapsto \left(\begin{array}{r}
3 \\
14
\end{array}\right)
,&
17\mapsto \left(\begin{array}{r}
5 \\
15
\end{array}\right)
.
\end{array}}
\newcommand\subgammaI{\begin{array}{llll}
0\mapsto \left(5\right)
,&
1\mapsto \left(6\right)
,&
2\mapsto \left(13\right)
,&
3\mapsto \left(14\right)
,\\
4\mapsto \left(15\right)
,&
5\mapsto \left(16\right)
,&
6\mapsto \left(3,\,1\right)
,&
7\mapsto \left(4,\,0\right)
,\\
8\mapsto \left(4,\,2\right)
,&
9\mapsto \left(5,\,0\right)
,&
10\mapsto \left(6,\,0\right)
,&
11\mapsto \left(11,\,8\right)
,\\
12\mapsto \left(12,\,8\right)
,&
13\mapsto \left(13,\,7\right)
,&
14\mapsto \left(14,\,10\right)
,&
15\mapsto \left(17,\,9\right)
.
\end{array}}
\newcommand\subgammaII{\begin{array}{llll}
0\mapsto \left(1\right)
,&
1\mapsto \left(0\right)
,&
2\mapsto \left(8\right)
,&
3\mapsto \left(6\right)
,\\
4\mapsto \left(10\right)
,&
5\mapsto \left(9\right)
,&
6\mapsto \left(7\right)
,&
7\mapsto \left(3\right)
,\\
8\mapsto \left(5\right)
,&
9\mapsto \left(4\right)
,&
10\mapsto \left(2\right)
,&
11\mapsto \left(14\right)
,\\
12\mapsto \left(12\right)
,&
13\mapsto \left(15\right)
,&
14\mapsto \left(11\right)
,&
15\mapsto \left(13\right)
.
\end{array}}
\newcommand\subgammaOIII{\begin{array}{llll}
0\mapsto \left(14\right)
,&
1\mapsto \left(13\right)
,&
2\mapsto \left(12,\,10\right)
,&
3\mapsto \left(11,\,8\right)
,\\
4\mapsto \left(14,\,7\right)
,&
5\mapsto \left(13,\,7\right)
,&
6\mapsto \left(12,\,7\right)
,&
7\mapsto \left(\begin{array}{r}
6 \\
12
\end{array}\right)
,\\
8\mapsto \left(\begin{array}{r}
3 \\
14
\end{array}\right)
,&
9\mapsto \left(\begin{array}{r}
3 \\
13
\end{array}\right)
,&
10\mapsto \left(\begin{array}{r}
2 \\
12
\end{array}\right)
,&
11\mapsto \left(\begin{array}{rr}
6 & 1 \\
12 & 10
\end{array}\right)
,\\
12\mapsto \left(\begin{array}{rr}
6 & 1 \\
11 & 8
\end{array}\right)
,&
13\mapsto \left(\begin{array}{rr}
5 & 1 \\
15 & 9
\end{array}\right)
,&
14\mapsto \left(\begin{array}{rr}
4 & 1 \\
11 & 8
\end{array}\right)
,&
15\mapsto \left(\begin{array}{rr}
2 & 0 \\
12 & 7
\end{array}\right)
.
\end{array}}

\begin{proposition}\label{prop:self-similarity-OmegaZ}
    The Wang shift $\Omega_\Zcal$ is self-similar satisfying
        $\overline{\phi(\Omega_\mathcal{Z})}^\sigma = \Omega_\mathcal{Z}$
        where $\phi$ is the 2-dimensional morphism over the alphabet
        $\Hcal=\{0,\dots,15\}$
    \begin{equation}\label{eq:phi-2d}
    \begin{array}{ll}
        \phi:&\Hcal\to\Hcal^{*^2}\\[2mm]
        &\left\{\scriptsize\arraycolsep=1.8pt\subgammaOIII\right.
    \end{array}
    \end{equation}
\end{proposition}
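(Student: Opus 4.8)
The plan is to prove the equality $\overline{\phi(\Omega_\Zcal)}^\sigma=\Omega_\Zcal$ by establishing the two inclusions separately, after first verifying that $\phi$ is a genuine $2$-dimensional morphism on the factorial language $\Lcal=\Lcal(x)$ of the Wang shift. Since a Wang shift is a nearest-neighbour SFT, its language is generated by its admissible dominoes, so the whole argument reduces to finite checks over the horizontal dominoes $z_i\odot^1 z_j$ (east colour of $z_i$ equals west colour of $z_j$) and the vertical dominoes $z_i\odot^2 z_j$ (north colour of $z_i$ equals south colour of $z_j$). The first thing I would verify is \emph{shape-consistency}: reading off \eqref{eq:phi-2d}, the letters split into a width-$1$ class $\{0,1,7,8,9,10\}$ and a width-$2$ class, and into a height-$1$ class $\{0,\dots,6\}$ and a height-$2$ class. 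For $\phi(z_i)\odot^1\phi(z_j)$ to be defined one needs $z_i,z_j$ of equal height class, and for $\phi(z_i)\odot^2\phi(z_j)$ equal width class; I would check that every admissible horizontal domino joins two tiles of equal height class and every admissible vertical domino two tiles of equal width class. This finite verification (organised via the edge colours) is exactly the statement that $\phi$ is well defined on $\Lcal$, and it guarantees that width depends only on the column and height only on the row, so that $\phi(x)$ lies on a clean rectangular product grid.

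For the inclusion $\overline{\phi(\Omega_\Zcal)}^\sigma\subseteq\Omega_\Zcal$ I would check edge-matching: each block $\phi(z_i)$ is internally a valid patch of $\Zcal$, and for each admissible domino $z_i\odot^k z_j$ the concatenation $\phi(z_i)\odot^k\phi(z_j)$ is again valid. Because $\Lcal$ is factorial and the constraints are nearest-neighbour, every adjacency arising in $\phi(x)$ is either internal to a single block or straddles a domino seam; hence validity of the images of all dominoes propagates to $\phi(x)$ for every $x\in\Omega_\Zcal$, and so to every shift of it. This step is a routine finite computation over the admissible dominoes.

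The substantial direction is $\Omega_\Zcal\subseteq\overline{\phi(\Omega_\Zcal)}^\sigma$, which I would obtain from a \emph{recognizability} (unique desubstitution) argument. Given $y\in\Omega_\Zcal$, shape-consistency read along bi-infinite lines forces width to be constant on each column and height constant on each row, so $y$ carries a rectangular product grid whose cells have the shapes $1\times1$, $2\times1$, $1\times2$, $2\times2$ occurring in $\phi$. I would then show this grid is uniquely forced by exhibiting a \emph{marker}: a tile (or short local pattern) of $\Zcal$ that can occur only at a prescribed corner of a supertile, so that the partition of columns into widths and of rows into heights is determined, its alternation realising the Fibonacci-like structure behind the numeration system. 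Once the partition is forced, each cell coincides with some $\phi(z_i)$, and since $\phi$ is injective on letters (the $16$ images are pairwise distinct) one defines $x_\bn=z_i$ at the corresponding grid position; verifying that the recovered horizontal and vertical adjacencies of the $x_\bn$ are exactly the admissible dominoes of $\Zcal$ yields $x\in\Omega_\Zcal$ with $y=\sigma^\bk\phi(x)$ for a suitable $\bk\in\Z^2$.

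The main obstacle is this recognizability step: proving that the local matching rules \emph{globally force} a unique grid of supertile boundaries. For the non-uniform Fibonacci-like shapes this means controlling how width-$1$ and width-$2$ columns (and height-$1$/height-$2$ rows) alternate and ruling out competing decompositions, which I expect to handle by reducing global uniqueness to a finite, checkable condition on the markers. The remaining shape-consistency, edge-matching, and injectivity verifications are routine finite computations.
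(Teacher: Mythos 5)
Your two easy steps (shape-consistency and the inclusion $\overline{\phi(\Omega_\Zcal)}^\sigma\subseteq\Omega_\Zcal$ via finite checks on dominoes) are fine, and you correctly identify recognizability as the crux. But that is exactly the step you do not actually supply, and the notion of ``marker'' you invoke to supply it is not the one that does the work. In the paper a set of markers $M\subset\Acal$ is a set of \emph{letters} whose occurrences in every valid configuration form non-adjacent complete $(d-1)$-dimensional layers orthogonal to a fixed direction $\be_i$ (Definition~\ref{def:markers}); this global layer structure is what can be certified by a finite radius-$2$ computation and is what forces a unique column (resp.\ row) partition. Your version --- ``a tile or short local pattern that can occur only at a prescribed corner of a supertile'' --- is a different and weaker property; it does not by itself rule out competing decompositions of the grid, and you give no mechanism for promoting a local corner condition to a global forced partition. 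Attempting to force the full two-dimensional rectangular grid of $\phi$ in one shot, with its non-uniform $1\times1$, $2\times1$, $1\times2$, $2\times2$ cells, is precisely the hard part, and ``I expect to handle it'' is where the proof is missing.

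The paper avoids this one-shot problem by \emph{factoring} $\phi$: it never proves recognizability of $\phi$ directly. Instead it finds markers $M=\{0,\dots,6\}$ in direction $\be_2$ for $\Zcal$, desubstitutes to a morphism $\gamma_0:\Omega_{\Zcal_1}\to\Omega_\Zcal$ over a \emph{new} $18$-tile set $\Zcal_1$, then finds markers in direction $\be_1$ for $\Zcal_1$ and desubstitutes again to $\gamma_1:\Omega_{\Zcal_2}\to\Omega_{\Zcal_1}$, and finally checks that $\Zcal_2$ is equivalent to $\Zcal$ via a bijection $\gamma_2$, so that $\phi=\gamma_0\gamma_1\gamma_2$ and $\Omega_\Zcal=\overline{\gamma_0\gamma_1\gamma_2(\Omega_\Zcal)}^\sigma$. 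Each stage is a one-directional marker argument covered by Theorem~\ref{thm:if-markers-desubstitute} (recognizable and onto up to a shift), so two $1$-dimensional forcing arguments plus an equivalence of tile sets replace your single $2$-dimensional one. This also exposes a second gap in your plan: you tacitly assume that desubstituting a configuration of $\Omega_\Zcal$ lands back on the alphabet $\Zcal$, i.e.\ that every grid cell ``coincides with some $\phi(z_i)$''. A priori the derived tile set is something else entirely (here the first derivative has $18$ tiles, not $16$), and the fact that after two derivations one returns to a set equivalent to $\Zcal$ is a nontrivial discovery of the computation, not a consequence of injectivity of $\phi$ on letters. To repair your argument you would either need to carry out the genuinely $2$-dimensional recognizability proof for $\phi$ (hard), or adopt the paper's factorization through intermediate tile sets.
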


The proof of Proposition~\ref{prop:self-similarity-OmegaZ} is done in the
appendix. It is using algorithms to desubstitute Wang shifts based on the
notion of marker tiles \cite{MR4226493}.

Similarly to the 1-dimensional case, we can build an automaton associated to 
a fixed point of the
2-dimensional morphism $\phi$ defined in Equation~\eqref{eq:phi-2d}. 
Let 
$s=\left(\begin{smallmatrix}8 & 12\\1 & 6\end{smallmatrix}\right)\in\Hcal^{(2,2)}$
be the seed associating one letter to each quadrant.
We observe that $\phi^2(s)$ prolongates $s$ at the origin.
Therefore, $\lim_{k\to\infty}\phi^{2k}(s)$ defines a configuration in
$\Hcal^{\Z^2}$ which is a fixed point of $\phi^2$.

Associated to the morphism $\phi$ and to the seed
$s=\left(\begin{smallmatrix}s_{(-1,0)} & s_{(0,0)}\\s_{(-1,-1)} & s_{(0,-1)}\end{smallmatrix}\right)\in\Hcal^{(2,2)}$,
we construct a DFAO $\mathcal{A}_{\phi,s} =
(\Hcal\cup\{\textsc{start}\},\Sigma,\delta,I,\Hcal)$
such that
$\Sigma = \left\{ \left(\begin{smallmatrix} 0 \\ 0 \end{smallmatrix}\right),
		          \left(\begin{smallmatrix} 0 \\ 1 \end{smallmatrix}\right),
		          \left(\begin{smallmatrix} 1 \\ 0 \end{smallmatrix}\right),
		          \left(\begin{smallmatrix} 1 \\ 1 \end{smallmatrix}\right) \right\}$,
$I = \left\{\textsc{start}\right\}$ and
$\delta: Q\times \Sigma \rightarrow Q$ is a partial function such that
\begin{itemize}
    \item $\delta(\textsc{start},\rep_\Fcal(\bn)) = s_\bn$ 
        for every $\bn\in I_0^2=\{(0,0),(-1,0),(0,-1),(-1,-1)\}$,
    \item $\delta(a,e)=b$ for any $a,b\in\mathcal{H}$ and $e\in\Sigma$ if and only if 
    $b$ is in $\phi(a)$ at position $e$.
\end{itemize} 

The automaton $\Acal_{\phi,s}$ associated to the morphism $\phi$ 
and seed
$s=\left(\begin{smallmatrix}8 & 12\\1 & 6\end{smallmatrix}\right)$
is shown in Figure~\ref{fig:automaton}.

\begin{figure}[h]
    \begin{center}
        \includegraphics[width=\linewidth]{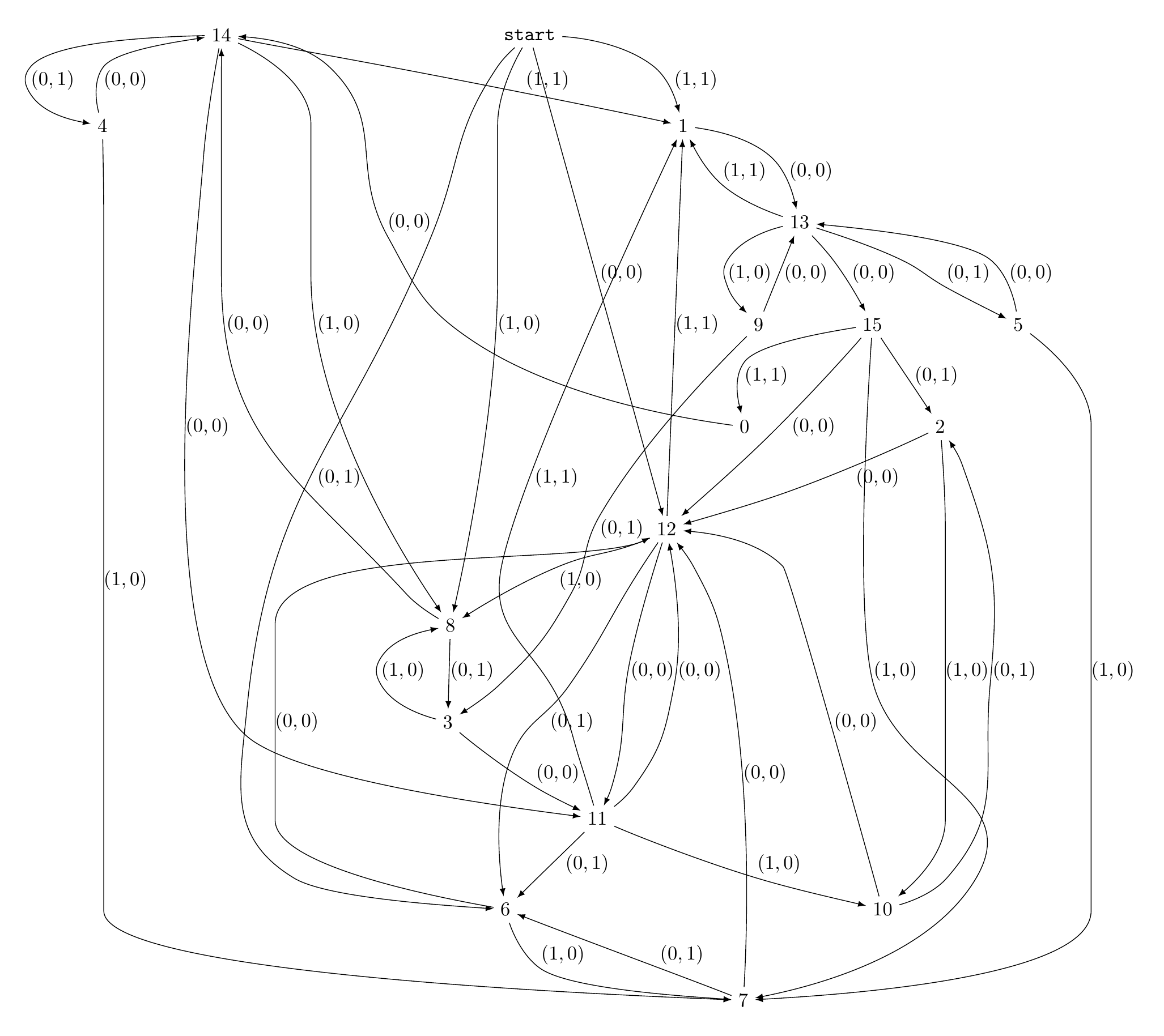}
    \end{center}
    \caption[The automaton $\Acal_{\phi,s}$ associated to the 2-dimensional
             morphism $\phi$ and seed $s$.]
            {The automaton $\Acal_{\phi,s}$ associated to the 2-dimensional
             morphism $\phi$ and seed $s=\left(\begin{smallmatrix}8 & 12\\1 &
             6\end{smallmatrix}\right)$.}
    \label{fig:automaton}
\end{figure}

\section{Proof of Main Results}
\label{sec:proof-main-results}

In this section, we prove Theorem~\ref{theoremA}.
The strategy is to extract the horizontal and vertical structure (expressed as
1-dimensional morphisms) of a 2-dimensional morphism and exploit results
proved for the 1-dimensional case in Section~\ref{sec:the-automaton-for-Z},
in particular Lemma~\ref{lem:both-signs-inductive-step-representation}.

Let $\omega$ be a 2-dimensional morphism on the alphabet $\Qcal$
and $\Xcal_\omega$ be the associated substitutive subshift.
Since $\omega:\Xcal_\omega\to\Xcal_\omega$ is well-defined, it imposes that
the horizontal width of 
$\omega(a)$
equals the horizontal width of 
$\omega(b)$ for every pair of letters $a,b\in\Qcal$ appearing in the same column.
This holds also for the height of the images of letters appearing in the same row.
However, more can be said.

We define $\sim_{\rm col}$ the equivalence relation as the reflexive,
symmetric and transitive closure of the relation
$ \{(a, b) \mid \left(\begin{smallmatrix} b \\ a	\end{smallmatrix}\right)
    \in \mathcal{L}_\omega\}$
made of the vertical dominoes in the language.
We define $\sim_{\rm row}$ the equivalence relation as the reflexive,
symmetric and transitive closure of the relation
$ \{(a, b) \mid \left(\begin{smallmatrix} a & b	\end{smallmatrix}\right)
    \in \mathcal{L}_\omega\}$
made of the horizontal dominoes in the language.
We have that $a\sim_{\rm col}b$ if and only if letters $a$ and $b$ appear in the same column in
some configuration of $\Xcal_\omega$.
Let $\pi_{\rm col}:\Qcal \rightarrow \Qcal|_{\sim_{\rm col}}$ 
and $\pi_{\rm row}:\Qcal \rightarrow \Qcal|_{\sim_{\rm row}}$ 
be the maps defined as
$ \pi_{\rm col}: a \mapsto [a]_{\sim_{\rm col}}$,
 and  $\pi_{\rm row}: a \mapsto [a]_{\sim_{\rm row}}$
mapping a letter $a \in \Qcal$ to its equivalence class.

We define the horizontal and vertical structure of $\omega$ resp. as the
1-dimensional morphisms
$\omega_\textsc{horiz}:\Qcal|_{\sim_{\rm col}}^*\to\Qcal|_{\sim_{\rm col}}^*$
and
$\omega_\textsc{vert}:\Qcal|_{\sim_{\rm row}}^*\to\Qcal|_{\sim_{\rm row}}^*$ by
    \[
    \omega_\textsc{horiz}([a]_{\sim_{\rm col}})
        =[w_{0,0}]_{\sim_{\rm col}} \cdots [w_{m-1,0}]_{\sim_{\rm col}},
    \]
    \[
        \omega_\textsc{vert}([a]_{\sim_{\rm row}})
        =[w_{0,0}]_{\sim_{\rm row}} \cdots [w_{0,n-1}]_{\sim_{\rm row}}
    \]
where $w=\omega(a)\in\Qcal^{(m,n)}$.

\begin{lemma}\label{lemma1}
	Let $\phi$ be the 2-dimensional morphism from Proposition \ref{prop:self-similarity-OmegaZ} and $x$ the point fixed by $\phi^2,$
	$x = \lim_{k \rightarrow +\infty}\phi^{2k}(s)$ for the seed $s=\left(\begin{smallmatrix}8&12\\1&6\end{smallmatrix}\right)$.
	Let $\bn\in\Z^2\setminus I_0^2$. Then, there exist vectors $\bm\in\Z^2$ and $
	\bell\in\left\{0,1,2\right\}^2$ such that
	$$x_{\bn}=\phi^2(x_{\bm})[\bell] \text{, where } \rep_\Fcal(\bn) = \rep_\Fcal(\bm)\cdot h(\bell).$$
\end{lemma}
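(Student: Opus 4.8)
The plan is to lift the one-dimensional result of Lemma~\ref{lem:both-signs-inductive-step-representation} to two dimensions by separately controlling the horizontal and vertical coordinates through the horizontal and vertical structures of $\phi$. First I would observe that, because $\phi^2$ fixes $x$ and the representation $\rep_\Fcal(\bn)$ has a well-defined length $2k+1$ attached to the level $I_k^2\setminus I_{k-1}^2$ (Lemma~\ref{lem:level}), the problem of locating the letter $x_\bn$ inside a block $\phi^2(x_\bm)$ decouples into the horizontal and vertical directions. Writing $\bn=(n_1,n_2)$ and $\rep_\Fcal(\bn)=\left(\begin{smallmatrix}u\\v\end{smallmatrix}\right)$, the two rows $u,v$ are (possibly padded) copies of $\rep_\Fcal(n_1)$ and $\rep_\Fcal(n_2)$, each a valid odd-length $\Fcal$-representation with no consecutive ones in the sense used earlier.

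Next I would apply the one-dimensional lemma to each coordinate. Using the horizontal structure $\phi_\textsc{horiz}$, the letter-class $\pi_{\rm col}(x_\bn)$ is governed by a one-dimensional fixed point whose digit sequence is exactly the top row $u$; since $u$ is the (padded) $\Fcal$-representation of $n_1$, Lemma~\ref{lem:both-signs-inductive-step-representation} yields an integer $m_1$ and a digit $\ell_1\in\{0,1,2\}$ with $u = \rep_\Fcal(m_1)\,h(\ell_1)$ and the horizontal position of $x_\bn$ inside its $\phi^2$-block equal to $\ell_1$. Symmetrically, applying the lemma to the vertical structure $\phi_\textsc{vert}$ and the bottom row $v$ produces $m_2$ and $\ell_2$ with $v=\rep_\Fcal(m_2)\,h(\ell_2)$. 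Setting $\bm=(m_1,m_2)$ and $\bell=(\ell_1,\ell_2)$, the position of $x_\bn$ within $\phi^2(x_\bm)$ is $\bell$, which is precisely the assertion $x_\bn=\phi^2(x_\bm)[\bell]$.

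The key identity $\rep_\Fcal(\bn)=\rep_\Fcal(\bm)\,h(\bell)$ then requires reassembling the two rows. From the one-dimensional applications we have $u=\rep_\Fcal(m_1)h(\ell_1)$ and $v=\rep_\Fcal(m_2)h(\ell_2)$, and since $h$ acts digitwise with $|h(\ell_i)|=2$, stripping the last two columns of $\left(\begin{smallmatrix}u\\v\end{smallmatrix}\right)$ leaves $\left(\begin{smallmatrix}\rep_\Fcal(m_1)\\\rep_\Fcal(m_2)\end{smallmatrix}\right)$ of equal length $2k-1$. I would verify that this truncated word is itself the canonical two-dimensional representation $\rep_\Fcal(\bm)$; by the ``moreover'' clause of Lemma~\ref{lem:both-signs-inductive-step-representation}, each $m_i$ drops exactly one level, so $m_1,m_2\in I_{k-1}\setminus I_{k-2}$ lie in the same level and their representations already share the common length $2k-1$, requiring no further padding. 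Hence the two truncated rows are exactly $\rep_\Fcal(m_1)$ and $\rep_\Fcal(m_2)$ with consistent padding, giving $\rep_\Fcal(\bm)=\left(\begin{smallmatrix}\rep_\Fcal(m_1)\\\rep_\Fcal(m_2)\end{smallmatrix}\right)$.

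The main obstacle I anticipate is the bookkeeping of padding and the legitimacy of the row-wise decoupling. One must check that the padding convention ($00$ for nonnegative, $10$ for negative) introduced in the definition of $\rep_\Fcal$ on $\Z^2$ is compatible with appending $h(\ell_i)$ independently in each row: the danger is that a coordinate $n_i$ whose representation was padded could, after desubstitution, correspond to an $m_i$ whose own padding differs, breaking the clean factorization $\rep_\Fcal(\bn)=\rep_\Fcal(\bm)h(\bell)$. The level analysis of Lemma~\ref{lem:level} together with the ``moreover'' clause of the one-dimensional lemma is what resolves this, since it forces both coordinates into the same level before and after, so the padding lengths match on both rows simultaneously. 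The remaining subtlety is to justify that the horizontal and vertical structures $\phi_\textsc{horiz}$, $\phi_\textsc{vert}$ each coincide (up to the letter-class projections $\pi_{\rm col}$, $\pi_{\rm row}$) with the one-dimensional Fibonacci-type morphism $\varphi^2$ used in Lemma~\ref{lem:both-signs-inductive-step-representation}, so that the one-dimensional result genuinely applies; this is a finite check on the explicit morphism $\phi$ of Proposition~\ref{prop:self-similarity-OmegaZ}.
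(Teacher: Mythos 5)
Your overall strategy is exactly the paper's: decouple the two coordinates via the horizontal and vertical structures of $\phi$ (both equal to the Fibonacci morphism), apply Lemma~\ref{lem:both-signs-inductive-step-representation} to each coordinate to obtain $(m_i,\ell_i)$, and reassemble. The gap is in the reassembly. You claim that the ``moreover'' clause forces $m_1$ and $m_2$ into the same level $I_{k-1}\setminus I_{k-2}$, so that the truncated rows ``already share the common length'' and ``no further padding'' is needed. This is false: $\bn\in I_k^2\setminus I_{k-1}^2$ only means that the \emph{maximum} of the levels of $n_1$ and $n_2$ equals $k$; the other coordinate can sit in any lower level, including $I_0$ (e.g.\ $\bn=(100,0)$). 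Consequently $u$ is in general a strictly padded copy of $\rep_\Fcal(n_1)$, your equation $u=\rep_\Fcal(m_1)h(\ell_1)$ does not follow from the one-dimensional lemma (which yields $\rep_\Fcal(n_1)=\rep_\Fcal(m_1)h(\ell_1)$, a shorter word), and the two truncated rows need not be canonical representations of equal length.

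What actually closes the argument --- and what the paper's final display does --- is the commutation of padding with appending $h(\ell_i)$: since $\rep_\Fcal(m_i)$ is a nonempty prefix of $\rep_\Fcal(n_i)$, the two words carry the same leading sign bit, hence are padded with the same block ($00$ or $10$), and
\[
\pad_t\bigl(\rep_\Fcal(n_i)\bigr)=\pad_t\bigl(\rep_\Fcal(m_i)h(\ell_i)\bigr)=\pad_{t-2}\bigl(\rep_\Fcal(m_i)\bigr)\cdot h(\ell_i).
\]
Stacking the two rows then gives $\rep_\Fcal(\bn)=\rep_\Fcal(\bm)\cdot h(\bell)$ with $t-2$ the new common length, regardless of whether the levels of $n_1$ and $n_2$ agree. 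You must also handle separately a coordinate $n_i\in I_0$, to which the one-dimensional lemma does not apply: there one takes $m_i=n_i$ with $\ell_i=0$ (for $n_i=0$) or $\ell_i=1$ (for $n_i=-1$), consistently with the identities $\val_\Fcal(0w)=\val_\Fcal(000w)$ and $\val_\Fcal(1w)=\val_\Fcal(101w)$ and with the loops in Figure~\ref{fig:numeration-system-F}. With these two repairs your proof coincides with the paper's.
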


\begin{proof}
    Let $\bn\in\Z^2\setminus I_0^2$.
    The powers $\phi^{2k}(s)$ are defined such that they grow in all four quadrants.
    The vertical and horizontal structure of $\phi$ are respectively
    $ \phi_\textsc{vert} = \phi_\textsc{horiz} = 
        \left\{\begin{array}{l}
                a\mapsto ab\\
                b\mapsto a
               \end{array}\right.$
    satisfying
    \begin{align*}
        \width(\phi(w)) &= |\phi_\textsc{horiz}\circ\pi_{col}(w_{0,0}\cdots w_{m-1,0})|\\
        \height(\phi(w)) &= |\phi_\textsc{vert}\circ\pi_{row}(w_{0,0}\cdots w_{0,n-1})|
    \end{align*}
    for all 2-dimensional words $w\in\Lcal_\phi$ of shape $(m,n)$.
    Therefore, the vectors $\bm$ and $\bell$ we are searching for can be found
    coordinate by coordinate.

    Let $y=\lim_{k\to\infty}\phi_\textsc{horiz}^{2k}(b.a)
      =\lim_{k\to\infty}\phi_\textsc{vert}^{2k}(b.a)$.
    From Lemma~\ref{lem:both-signs-inductive-step-representation},
	there exist integers $m_1,m_2\in\Z$ and $ 0\leq \ell_1,\ell_2 < 3$ such that
    \[
        y[n_1] = \phi_\textsc{horiz}^2(y[m_1])[\ell_1]
        \qquad
        \text{ and }
        \qquad
        y[n_2] = \phi_\textsc{vert}^2(y[m_2])[\ell_2],
    \]
	where 
        $\rep_\Fcal(n_1) = \rep_\Fcal(m_1)\cdot h(\ell_1)$ and 
        $\rep_\Fcal(n_2) = \rep_\Fcal(m_2)\cdot h(\ell_2)$.
    Moreover, it satisfies
    $x_{(n_1,n_2)}=\phi^2(x_{(m_1,m_2)})[(\ell_1,\ell_2)]$.
    We conclude
    \[
        \rep_\Gcal(\bn) = 
        \left(\begin{array}{l}
            \pad_t(\rep_\Fcal(n_1)) \\
            \pad_t(\rep_\Fcal(n_2)) 
        \end{array}\right)
        = 
        \left(\begin{array}{l}
            \pad_{t-2}(\rep_\Fcal(m_1)\cdot h(\ell_1)\\
            \pad_{t-2}(\rep_\Fcal(m_2)\cdot h(\ell_2)
        \end{array}\right)
	    = \rep_\Fcal(\bm)\cdot h(\bell).
    \]
    where $t=\max\{|\rep_\Fcal(n_1)|,|\rep_\Fcal(n_2)|\}$.\qed
\end{proof}


\begin{lemma}\label{lemma3}
	For any state $r\in Q\setminus\{\textsc{start}\}$ in the automaton $\mathcal{A}_{\phi,s}$ and any 
    $\bell\in\left\{0,...,\width(\phi^2(r))-1\right\}\times 
             \left\{0,...,\height(\phi^2(r))-1\right\}$ we have  
	$$\mathcal{A}_{\phi,s}(r,h(\bell)) = \phi^2(r)[\bell].$$
\end{lemma}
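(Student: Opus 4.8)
The plan is to unfold both sides into a two-step navigation through $\phi^2=\phi\circ\phi$ and check that the two navigations agree. First I would make the word $h(\bell)$ explicit. Writing $h(\ell_1)=\ell_{1,0}\ell_{1,1}$ and $h(\ell_2)=\ell_{2,0}\ell_{2,1}$ with $\ell_{i,j}\in\{0,1\}$, the column-stacking convention used in Lemma~\ref{lemma1} gives that $h(\bell)$ is the length-two word $E_0E_1$ over $\Sigma$ with $E_j=\left(\begin{smallmatrix}\ell_{1,j}\\\ell_{2,j}\end{smallmatrix}\right)$. Since $r\in\Hcal$, the second clause defining $\delta$ applies at each step and the path never returns to \textsc{start}, so
\[
\mathcal{A}_{\phi,s}(r,h(\bell))
=\delta\bigl(\delta(r,E_0),E_1\bigr)
=\phi\bigl(\phi(r)[(\ell_{1,0},\ell_{2,0})]\bigr)\bigl[(\ell_{1,1},\ell_{2,1})\bigr].
\]
It therefore suffices to prove the block-decomposition identity
\[
\phi^2(r)[\bell]
=\phi\bigl(\phi(r)[(\ell_{1,0},\ell_{2,0})]\bigr)\bigl[(\ell_{1,1},\ell_{2,1})\bigr],
\]
that is, in $\phi^2(r)=\phi(\phi(r))$ the global position $\bell=(\ell_1,\ell_2)$ lies in the block coming from the cell $(\ell_{1,0},\ell_{2,0})$ of $\phi(r)$, at the local position $(\ell_{1,1},\ell_{2,1})$ inside that block.

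Next I would factor this two-dimensional decomposition into independent horizontal and vertical one-dimensional decompositions. All images $\phi(c)$ are rectangular words, and the width/height formulas recorded in the proof of Lemma~\ref{lemma1}, giving $\width(\phi^2(r))=|\phi_\textsc{horiz}^2(\pi_{\rm col}(r))|$ and $\height(\phi^2(r))=|\phi_\textsc{vert}^2(\pi_{\rm row}(r))|$ together with the analogous formulas for each block, show that the column boundaries of $\phi^2(r)$ depend only on the column class $\pi_{\rm col}(r)$ and the row boundaries only on $\pi_{\rm row}(r)$. Hence the cell containing $\bell$ and the local position inside it split coordinatewise, and the identity reduces to two one-dimensional statements: the block/local decomposition of $\ell_1$ in $\phi_\textsc{horiz}^2(\pi_{\rm col}(r))$ is $(\ell_{1,0},\ell_{1,1})$, and the block/local decomposition of $\ell_2$ in $\phi_\textsc{vert}^2(\pi_{\rm row}(r))$ is $(\ell_{2,0},\ell_{2,1})$.

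Finally I would settle each one-dimensional statement by direct inspection of the Fibonacci rule $\phi_\textsc{horiz}=\phi_\textsc{vert}\colon a\mapsto ab,\,b\mapsto a$. One has $\phi_\textsc{horiz}^2(a)=aba$ and $\phi_\textsc{horiz}^2(b)=ab$, so the admissible values of $\ell_1$ are exactly $\{0,1,2\}$ when $\pi_{\rm col}(r)=a$ and $\{0,1\}$ when $\pi_{\rm col}(r)=b$, which is precisely the range dictated by $\width(\phi^2(r))$. Reading off the block index (a position in $\phi_\textsc{horiz}(\pi_{\rm col}(r))$, of length $1$ or $2$) and the local index in each of the three (resp.\ two) cases, one checks that the resulting digit pair equals $h(0)=00$, $h(1)=01$, $h(2)=10$ respectively; the vertical case is identical. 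Substituting these decompositions back yields the block-decomposition identity and hence the lemma. I expect the only delicate point to be this bookkeeping together with the coordinatewise factorization: one must verify that the chosen $E_0$ is a valid position of $\phi(r)$ and $E_1$ a valid position of the corresponding block, which is exactly what the matching between the domain of $h$ and the Fibonacci block structure guarantees.
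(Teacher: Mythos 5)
Your proposal is correct and follows essentially the same route as the paper's proof: both unfold $\mathcal{A}_{\phi,s}(r,h(\bell))$ into two transitions, identify the result with the block decomposition $\phi^2(r)[\bell]=\phi(\phi(r)[\bell_0])[\bell_1]$, and then verify that the digit pair $\bell_0\bell_1$ coincides with $h(\bell)$ by inspecting the finitely many cases. The only (cosmetic) difference is that you organize that final check coordinatewise through the one-dimensional Fibonacci substitution $a\mapsto ab$, $b\mapsto a$, while the paper enumerates the cases $\bell\in\{0,1\}^2$, $\{2\}\times\{0,1\}$, $\{2\}^2$ directly on the pairs.
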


\begin{proof}
	Let $r$ and $\bell$ be according to the assumptions. Therefore,
	$\bell \in \left\{0,1,2\right\}^2.$
	Then, there exist unique vectors $\bell_0,\bell_1\in
	\left\{0,1\right\}^2$ such that $\phi^2(s)[\bell] = \phi(\phi(s)[\bell_0])[\bell_1].$
	As $\bell_0, \bell_1 \in \left\{0,1\right\}^2$, they belong to the set of edges $\Sigma$ of the automaton $\mathcal{A}_{\phi,s}$.
	Then, using the general properties of an automaton we have
	\begin{align*}
		\phi^2(r)[\bell] 
            &= \phi(\phi(r)[\bell_0])[\bell_1]
			 = \phi(\mathcal{A}_{\phi,s}(r, \bell_0))[\bell_1]\\
			&= \mathcal{A}_{\phi,s}(\mathcal{A}_{\phi,s}(r, \bell_0),\bell_1)
			 = \mathcal{A}_{\phi,s}(r, \bell_0\bell_1)
			 = \mathcal{A}_{\phi,s}(r, h(\bell)),
	\end{align*}
	where the last equation holds for the following reasons.
	
	I) If $\bell\in\left\{0,1\right\}^2, \left(\begin{smallmatrix}	i \\ j	\end{smallmatrix}\right) = \bell,$
	then $h(\bell) =\left(\begin{smallmatrix}	0 & i \\ 0 & j	\end{smallmatrix}\right).$
	Also, 
	$\bell_0 = \left(\begin{smallmatrix}	0 \\ 0	\end{smallmatrix}\right)$ and
	$\bell_1 = \left(\begin{smallmatrix}	i \\ j	\end{smallmatrix}\right).$
	
	II) If $\ell \in \left\{2\right\}\times\left\{0,1\right\}, 
	\left(\begin{smallmatrix}	2 \\ j	\end{smallmatrix}\right) = \bell,$
	then $h(\bell) =\left(\begin{smallmatrix}	1 & 0 \\ 0 & j	\end{smallmatrix}\right).$
	On the other hand,
	$\bell_0 = \left(\begin{smallmatrix}	1 \\ 0	\end{smallmatrix}\right)$ and
	$\bell_1 = \left(\begin{smallmatrix}	0 \\ j	\end{smallmatrix}\right).$
	The case $\bell \in \left\{0,1\right\}\times\left\{2\right\}$ is analogical.
	
	III) If $\bell \in \left\{2\right\}^2,$
	then $h(\bell) =\left(\begin{smallmatrix}	1 & 0 \\ 1 & 0	\end{smallmatrix}\right).$
	Also,
	$\bell_0 = \left(\begin{smallmatrix}	1 \\ 1	\end{smallmatrix}\right)$ and
	$\bell_1 = \left(\begin{smallmatrix}	0 \\ 0	\end{smallmatrix}\right).$
    \qed
\end{proof}

\begin{theorem}\label{thm:our-Cobham}
	Let $\phi$ be a 2-dimensional morphism and $x$ the point fixed by $\phi^2$,
	$x = \lim_{k \rightarrow +\infty}\phi^{2k}(s)$ for a seed $s=\left(\begin{smallmatrix}8 & 12\\1 & 6\end{smallmatrix}\right)$.
	Then, there exists an automaton $\mathcal{A}$ such that
	$ x_{\bn} = \mathcal{A}(\rep_{\mathcal{F}}(\bn))$.
\end{theorem}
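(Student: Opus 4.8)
The plan is to prove the identity $x_\bn = \mathcal{A}_{\phi,s}(\rep_{\mathcal{F}}(\bn))$ by induction on the level index $k$ for which $\bn\in I_k^2\setminus I_{k-1}^2$, following the same scheme as the one-dimensional Proposition~\ref{prop:dfao-for-two-sided-fibonacci} but now feeding in the two-dimensional tools. For the base case $k=0$, I would invoke the very definition of $\mathcal{A}_{\phi,s}$: by construction $\delta(\textsc{start},\rep_{\mathcal{F}}(\bn)) = s_\bn$ for every $\bn\in I_0^2=\{(0,0),(-1,0),(0,-1),(-1,-1)\}$, and since the fixed point $x=\lim_{k\to\infty}\phi^{2k}(s)$ prolongates the seed $s$ at the origin, we have $x_\bn=s_\bn$ on exactly these four positions. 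Hence $x_\bn = s_\bn = \mathcal{A}_{\phi,s}(\rep_{\mathcal{F}}(\bn))$, settling the base case with $\mathcal{A}=\mathcal{A}_{\phi,s}$.

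For the inductive step, I would assume the identity holds for every $\bm$ with $|\rep_{\mathcal{F}}(\bm)|\le 2k+1$, i.e. for every $\bm\in I_k^2$, and take $\bn\in I_{k+1}^2\setminus I_k^2$, so that $|\rep_{\mathcal{F}}(\bn)|=2k+3$ by Lemma~\ref{lem:level}. First I would apply Lemma~\ref{lemma1} to obtain vectors $\bm\in\Z^2$ and $\bell\in\{0,1,2\}^2$ with $x_\bn=\phi^2(x_\bm)[\bell]$ and $\rep_{\mathcal{F}}(\bn)=\rep_{\mathcal{F}}(\bm)\cdot h(\bell)$. Since $|h(\bell)|=2$, the word $\rep_{\mathcal{F}}(\bm)$ has length $2k+1$, so $\bm\in I_k^2$ and the induction hypothesis applies, giving a state $r=x_\bm=\mathcal{A}_{\phi,s}(\rep_{\mathcal{F}}(\bm))$. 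Because $\bell$ is by construction an actual position inside $\phi^2(x_\bm)=\phi^2(r)$, Lemma~\ref{lemma3} yields $\phi^2(r)[\bell]=\mathcal{A}_{\phi,s}(r,h(\bell))$. Chaining these facts together, and using the path-concatenation property of the transition function (following $\rep_{\mathcal{F}}(\bm)$ from $\textsc{start}$ reaches $r$, after which one reads $h(\bell)$),
\begin{align*}
    x_\bn = \phi^2(x_\bm)[\bell] = \phi^2(r)[\bell] &= \mathcal{A}_{\phi,s}(r,h(\bell))\\
    &= \mathcal{A}_{\phi,s}(\rep_{\mathcal{F}}(\bm)\cdot h(\bell)) = \mathcal{A}_{\phi,s}(\rep_{\mathcal{F}}(\bn)),
\end{align*}
which closes the induction and proves the theorem with $\mathcal{A}=\mathcal{A}_{\phi,s}$.

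The step I expect to require the most care is the level bookkeeping that guarantees $\bm$ lies one level strictly below $\bn$, so that the induction hypothesis is genuinely available; this rests on the ``Moreover'' clauses of Lemma~\ref{lem:both-signs-inductive-step-representation}, applied coordinate-wise inside the proof of Lemma~\ref{lemma1}, which force $|\rep_{\mathcal{F}}(\bm)|=|\rep_{\mathcal{F}}(\bn)|-2$. A second point to verify is that the offset $\bell$ returned by Lemma~\ref{lemma1} satisfies the range condition $\bell\in\{0,\dots,\width(\phi^2(r))-1\}\times\{0,\dots,\height(\phi^2(r))-1\}$ demanded by Lemma~\ref{lemma3}; this is automatic, since the equality $x_\bn=\phi^2(x_\bm)[\bell]$ is only meaningful when $\bell$ indexes a genuine cell of the block $\phi^2(r)$. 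Everything else is a routine composition of the partial transition function, so the essential content of the argument is the interplay between the representation recursion of Lemma~\ref{lemma1} and the automaton-evaluation identity of Lemma~\ref{lemma3}.
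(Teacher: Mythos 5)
Your proposal is correct and follows essentially the same route as the paper: induction on the level $I_k^2\setminus I_{k-1}^2$, with the base case read off the definition of $\mathcal{A}_{\phi,s}$ on $I_0^2$, the decomposition $x_\bn=\phi^2(x_\bm)[\bell]$ with $\rep_\Fcal(\bn)=\rep_\Fcal(\bm)\cdot h(\bell)$ supplied by Lemma~\ref{lemma1}, the level bookkeeping via Lemma~\ref{lem:level}, and the evaluation step via Lemma~\ref{lemma3}. The only cosmetic difference is that you carry a cumulative induction hypothesis over all of $I_k^2$ where the paper restricts to the outermost shell $I_k^2\setminus I_{k-1}^2$; both suffice since $\bm$ lands exactly in that shell.
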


\begin{proof}
	Let $\Acal = \Acal_{\phi,s}$ the automaton associated to the morphism $\phi$ 
	and seed
	$s=\left(\begin{smallmatrix}8 & 12\\1 & 6\end{smallmatrix}\right)$.
	If $\bn \in I_0^2=\left\{(0,0), (-1,0), (-1,-1), (0,-1)\right\},$
	then $x_{\bn} = \mathcal{A}(\rep_{\mathcal{F}}(\bn))$.
	Induction hypothesis: 
    we assume for some $k\in\N$ that $x_{\bm} = \mathcal{A}(\rep_{\mathcal{F}}(\bm))$
    for all $\bm \in I_{k}^2\setminus I_{k-1}^2$.
	Let $\bn\in I_{k+1}^2\setminus I_k^2.$ Then, from Lemma \ref{lemma1} there exist
    $\bm\in\Z^2$ and $\bell\in\{0,1,2\}^2$ such that
	$x_{\bn}=\phi^2(x_{\bm})[\bell]$
	where
	$\rep_\Fcal(\bn) = \rep_\Fcal(\bm)\cdot h(\bell)$.

	This implies $|\rep_\Fcal(\bm)| = |\rep_\Fcal(\bn)| - 2,$
    and therefore by Lemma~\ref{lem:level}, $\bm\in I_{k}^2\setminus I_{k-1}^2.$
	From the induction hypothesis, 
	$x_{\bm} = \mathcal{A}(\rep_\Fcal(\bm))$.
	Then, from the induction hypothesis and Lemma \ref{lemma3}, we have
	\begin{align*}
		x_{\bn}
            &=\phi^2(x_{\bm})[\bell]
			 =\phi^2(\mathcal{A}(\rep_\Fcal(\bm)))[\bell]\\
            &=\mathcal{A}(\rep_\Fcal(\bm)h(\bell))
			 =\mathcal{A}(\rep_\Fcal(\bn)). \qquad\qed
	\end{align*}
\end{proof}

\begin{proof}[of Theorem~\ref{theoremA}]
	Let $\phi$ be the 2-dimensional morphism from 
    Proposition~\ref{prop:self-similarity-OmegaZ} and let
	$x=\phi^2(x)$ be the point fixed by $\phi^2,$
	where $x = \lim_{k \rightarrow +\infty} \phi^{2k}(s)$ of the seed $s=\left(\begin{smallmatrix}8 & 12\\1 & 6\end{smallmatrix}\right)$.
	Let $\Acal = \mathcal{A}_{\phi,s}$ (see Figure \ref{fig:automaton}).
	The conclusion follows from Theorem~\ref{thm:our-Cobham}.
    \qed
\end{proof}

\begin{example}
	Let $\bn=(-1,6)\in \Z^2$. Then, $\rep_\Fcal(\bn)=\left(
	\begin{smallmatrix}10101\\01001\end{smallmatrix}\right)$
	and $\Acal_{\phi,s}$ gives
	$$ \textsc{start}\xrightarrow{(1,0)}8\xrightarrow{(0,1)}3\xrightarrow{(1,0)}8
	\xrightarrow{(0,0)}14\xrightarrow{(1,1)}1.$$
	The tile at position $\bn$ in the tiling $x$ in Figure \ref{fig:configuration}
	is indeed $x_\bn = 1$. 
\end{example}

Since $\Omega_\Zcal$ is minimal, we believe that 
Theorem~\ref{theoremA} can be extended to all configurations
in $\Omega_\Zcal$ provided an additional input is given. 
Moreover, we believe that Theorem~\ref{theoremA} holds for a large family of
self-similar subshifts and not only for Fibonacci-like examples,
thus extending Cobham's theorem to $\Z^2$ and $\Z^d$.
This asks for further research and is part of an ongoing work.

\bibliographystyle{alpha} 
\bibliography{biblio}

\newpage
\section{Appendix}
\label{sec:appendix}

We now define the notion of markers for subshifts $X\subset\Acal^{\Z^d}$.
Their existence allows to desubstitute uniquely the configurations in $X$ 
using a $d$-dimensional morphism.
Originally, those results were proved for $d=2$ in order
to desubstitute configurations from Wang shifts, see
\cite{MR3978536} and \cite{MR4226493}.
It turns out that the notion of markers is more general and the results holds
in general subshifts $X\subset\Acal^{\Z^d}$ \cite{labbe_three_2020}.

Recall that if $w:\Z^d\to\Acal$ is a configuration and $a\in\Acal$ is a letter, then
$w^{-1}(a)\subset\Z^d$ is the set of positions where the letter $a$ appears in
$w$.

\begin{definition}\label{def:markers}
    Let $\Acal$ be an alphabet
    and $X\subset\Acal^{\Z^d}$ be a subshift.
    A nonempty subset $M\subset\Acal$ is called \emph{markers in the
    direction $\be_i$}, with $i\in\{1,\dots,d\}$,
    if positions of the letters of $M$ in any configuration are nonadjacent $(d-1)$-dimensional
    layers orthogonal to $\be_i$, that is,
    for all configurations $w\in X$
    there exists $P\subset\Z$ such that
    the positions of the markers satisfy
    \begin{equation*}
        w^{-1}(M) = P\be_i +\sum_{k\neq i} \Z\be_k
        \quad
        \text{ with }
        \quad
        1\notin P-P
    \end{equation*}
    where $P-P=\{b-a\mid a\in P, b\in P\}$ is the set of differences
    between elements of $P$.
\end{definition}

Note that it follows from the definition that a subset of markers is a proper
subset of $\Acal$ as the case $M=\Acal$ is impossible.

The presence of markers allows to
desubstitute uniquely the configurations of a subshift.
There is even a choice to be made in the construction of the substitution. 
We may construct the substitution in such a way that the markers are on the left or
on the right in the image of letters that are dominoes 
in the direction $\be_k$. We make this distinction in the statement of
the following result which was stated in the context of Wang shifts
in \cite{MR3978536,MR4226493} and extended to $\Z^d$ in \cite{labbe_three_2020}.

\begin{theorem}\label{thm:if-markers-desubstitute}
    {\rm\cite{labbe_three_2020}}
    Let $\Acal$ be an alphabet
    and $X\subset\Acal^{\Z^d}$ be a subshift.
    If there exists a subset
    $M\subset\Acal$ 
    of markers in the direction 
    $\be_i\in\{\be_1,\dots,\be_d\}$,
    then 
\begin{enumerate}
    \item[(i)] (markers on the right) there exists an alphabet $\Bcal_R$,
    a subshift $Y\subset\Bcal_R^{\Z^d}$
    and a $2$-dimensional morphism
    $\omega_R:Y\to X$
    such that 
    \begin{equation*}
        \omega_R(\Bcal_R)\subseteq (\Acal\setminus M)\cup 
        \left((\Acal\setminus M)\odot^i M\right)
    \end{equation*}
    which is recognizable and onto up to a shift and
\item[(ii)] (markers on the left) there exists an alphabet $\Bcal_L$,
    a subshift $Y\subset\Bcal_L^{\Z^d}$
    and a $2$-dimensional morphism
    $\omega_L:Y\to X$
    such that 
    \begin{equation*}
        \omega_L(\Bcal_L)\subseteq (\Acal\setminus M)\cup 
        \left(M\odot^i (\Acal\setminus M)\right)
    \end{equation*}
    which is recognizable and onto up to a shift.
\end{enumerate}
\end{theorem}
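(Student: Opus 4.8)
The plan is to exploit the layered structure imposed by the markers: in every configuration of $X$ the direction $\be_i$ is cut into pure $(d-1)$-dimensional layers, and I will regroup consecutive layers so that each block is either a single non-marker layer or a non-marker layer immediately followed by a marker layer. First I would fix a configuration $w\in X$ and invoke Definition~\ref{def:markers} to obtain $P\subset\Z$ with $w^{-1}(M)=P\be_i+\sum_{k\neq i}\Z\be_k$ and $1\notin P-P$. The condition $1\notin P-P$ says that marker layers are never adjacent; equivalently, $p-1\notin P$ for every $p\in P$, so the layer at $\be_i$-coordinate $p-1$ is a pure non-marker layer. For case (i) I would partition the $\be_i$-axis $\Z$ by pairing each $p\in P$ with its predecessor $p-1$ and leaving every remaining coordinate as a singleton. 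Since the pairs $\{p-1,p\}$ are pairwise disjoint (again by $1\notin P-P$) and disjoint from the singletons, this is a genuine partition of $\Z$ into blocks of $\be_i$-thickness $1$ or $2$, the thickness-$2$ blocks always ending in a marker layer.

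Next I would define a configuration $y$ over a finite alphabet $\Bcal_R$ whose letters record the block contents: a letter of $\Acal\setminus M$ for a singleton non-marker layer, and a domino in $(\Acal\setminus M)\odot^i M$ for a merged pair. The crucial point, and the step I expect to be the main obstacle, is to verify that this assignment is a genuine $d$-dimensional morphism in the sense of Section~\ref{sec:two-dim-words}. All images have orthogonal thickness $1$, so only the $\be_i$-thickness can differ, and one must check that within a single $\be_i$-layer of $y$ all letters have images of equal $\be_i$-thickness. This holds precisely because the marker layers are pure, so the cut set $P$ is the same for every line parallel to $\be_i$; hence the concatenations $\omega_R(u)\odot^i\omega_R(v)$ are always defined and $\omega_R$ respects $\odot^i$ as well as $\odot^j$ for $j\neq i$. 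Taking $Y$ to be the set of all configurations $y$ arising this way, one checks it is shift-invariant and closed, hence a subshift, and $\Bcal_R$ is finite since only finitely many layer-types and domino-types occur in $\Lcal_X$.

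It then remains to establish the two qualitative properties. For recognizability, the positions of $M$ are directly readable from $w$, so $P$ and therefore the entire block partition are reconstructed by a deterministic local rule; this yields a unique preimage $y$ up to the choice of origin, giving injectivity of $\omega_R$ modulo the shift. For onto up to a shift, every $w\in X$ admits the above decomposition, so $w=\sigma^{\bk}\omega_R(y)$ for a suitable $\bk\in\Z^d$ accounting for the position of the origin inside its block. Finally, case (ii) is entirely symmetric: pairing each $p\in P$ with its successor $p+1$ (valid since $p+1\notin P$, again by $1\notin P-P$) produces dominoes in $M\odot^i(\Acal\setminus M)$, and the same arguments yield $\omega_L$, $\Bcal_L$ and the corresponding subshift $Y$.
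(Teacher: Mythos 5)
The paper does not actually prove this theorem: it is imported by citation from \cite{labbe_three_2020} (and its Wang-shift predecessors), so there is no internal proof to compare against. That said, your blind reconstruction is correct and is essentially the argument used in the cited source: the condition $1\notin P-P$ forces every marker layer to have a non-marker layer on each side, so pairing each $p\in P$ with $p-1$ (resp.\ $p+1$) gives a forced, locally determined block decomposition into singletons from $\Acal\setminus M$ and dominoes from $(\Acal\setminus M)\odot^i M$ (resp.\ $M\odot^i(\Acal\setminus M)$), and the purity of the marker layers is exactly what guarantees that all letters in a given $\be_i$-layer of the derived configuration have images of the same $\be_i$-thickness, so that $\omega_R$ is a genuine $d$-dimensional morphism. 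You correctly identify that uniqueness of the decomposition (hence recognizability) follows because a marker can only occur as the marker half of a domino, which then forces the pairing of its neighbour. The only points stated without argument are that the set $Y$ of derived configurations is closed and that $\omega_R(Y)\subseteq X$ up to shift; both follow routinely from the fact that the block structure is determined by a local rule (the positions of $M$) together with compactness of $X$, so I would not count them as gaps, only as details to be written out.
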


The morphisms provided in Theorem~\ref{thm:if-markers-desubstitute} 
can be computed with an algorithm taking as input the subset of marker tiles.
Those algorithms are available in
\cite{MR4226493,labbe_three_2020}
and are
implemented in the \texttt{slabbe} optional package 
\cite{labbe_slabbe_0_6_2_2020} of SageMath. 
The algorithms are used in the proof below.

In the following proof, we use the notation $\llbracket m,n\rrbracket=\{k\in\Z\mid m\leq
k\leq n\}$, where $m,n\in\Z$, to denote interval of integers.

\begin{proof}[of Proposition~\ref{prop:self-similarity-OmegaZ}]
    We use the algorithms to desubstitute Wang shifts
    based on the notion of marker tiles \cite[Theorem 3.6]{MR4226493}.
We construct the set $\Zcal$ of 16 Wang tiles:
{\scriptsize
\begin{verbatim}
sage: from slabbe import WangTileSet
sage: tiles = ["DOJO", "DOHL", "JMDP", "DMDK", "HPJP", "HPHN", "HKDP", "BOIO",
....:          "ILEO", "ILCL", "ALIO", "EPIP", "IPIK", "IKBM", "IKAK", "CNIP"]
sage: Z = WangTileSet([tuple(tile) for tile in tiles]); Z
Wang tile set of cardinality 16
\end{verbatim}}

We compute a subset of markers $M\subset\Zcal$ for the direction $\be_2$ and we
    use them to desubstitute the Wang shift $\Omega_\Zcal$:
{\scriptsize
\begin{verbatim}
sage: Z.find_markers(i=2, radius=2, solver="dancing_links")
[[0, 1, 2, 3, 4, 5, 6]]
sage: M = [0, 1, 2, 3, 4, 5, 6]
sage: Z1,gamma0 = Z.find_substitution(M, i=2, radius=2,
....:            side="right", solver="dancing_links")
\end{verbatim}}
\noindent
We obtain the morphism $\gamma_0:\Omega_{\Zcal_1}\to\Omega_\Zcal$ given as a rule of the form
\[
\begin{array}{ll}
    \gamma_0:&\Zrange{17}\to\Zrange{15}^{*^2}\\[2mm]
    &\left\{\scriptsize\arraycolsep=1.8pt\subgammaO\right.
\end{array}
\]
and the set $\Zcal_1$ of 18 Wang tiles
\[
    \Zcal_1 = \left\{\raisebox{-6.5mm}{\includegraphics[width=.55\linewidth]
                                      {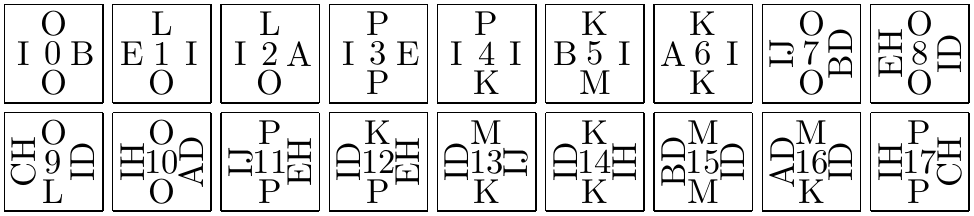}}\right\}.
\]
We compute a subset of markers $M_1\subset\Zcal_1$ for the direction $\be_1$ and we
    use them to desubstitute the Wang shift $\Omega_{\Zcal_1}$:
{\scriptsize
\begin{verbatim}
sage: Z1.find_markers(i=1, radius=1,solver="dancing_links")
[[0, 1, 2, 7, 8, 9, 10]]
sage: M = [0,1,2,7,8,9,10]
sage: Z2,gamma1 = Z1.find_substitution(M, i=1, radius=1, 
....:             side="right", solver="dancing_links")
\end{verbatim}}
\noindent
    We obtain the morphism $\gamma_1:\Omega_{\Zcal_2}\to\Omega_{\Zcal_1}$ given
    as a rule of the form
\[
\begin{array}{ll}
    \gamma_1:&\Zrange{15}\to\Zrange{17}^{*^2}\\[2mm]
    &\left\{\scriptsize\arraycolsep=1.8pt\subgammaI\right.
\end{array}
\]
and the set $\Zcal_2$ of 16 Wang tiles
\[
    \Zcal_2 = \left\{\raisebox{-6.5mm}{\includegraphics[width=.55\linewidth]
                                      {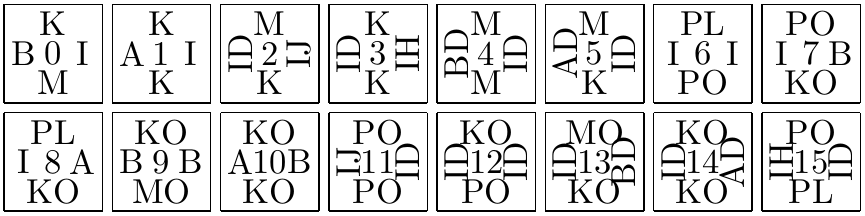}}\right\}.
\]
It turns out that $\Zcal$ and $\Zcal_2$ are equivalent. The proof can be done
using graph isomorphisms:
{\scriptsize
\begin{verbatim}
sage: Z2.is_equivalent(Z)
True
\end{verbatim}}
    \noindent
The bijection \texttt{vert} between the vertical colors,
the bijection \texttt{horiz} between the horizontal colors
and bijection $\gamma_2$ from $\Zcal_2$ to $\Zcal$ is computed as follows:
{\scriptsize
\begin{verbatim}
sage: _,vert,horiz,gamma2 = Z.is_equivalent(Z2, certificate=True)
\end{verbatim}
}
\noindent
We obtain the morphism $\gamma_2:\Omega_{\Zcal}\to\Omega_{\Zcal_2}$ given as a rule of the form
\[
\begin{array}{ll}
    \gamma_2:&\Zrange{15}\to\Zrange{15}^{*^2}\\[2mm]
    &\left\{\scriptsize\arraycolsep=1.8pt\subgammaII\right.
\end{array}
\]
and we have
\[
\begin{array}{ll}
    \gamma_0\gamma_1\gamma_2:&\Zrange{15}\to\Zrange{15}^{*^2}\\[2mm]
    &\left\{\scriptsize\arraycolsep=1.8pt\subgammaOIII\right.
\end{array}
\]
We conclude that
$\Omega_\Zcal
    =\overline{\gamma_0(\Omega_{\Zcal_1})}^\sigma
    =\overline{\gamma_0\gamma_1(\Omega_{\Zcal_2})}^\sigma
    =\overline{\gamma_0\gamma_1\gamma_2(\Omega_\Zcal)}^\sigma
    =\overline{\phi(\Omega_\Zcal)}^\sigma$.
    \qed
\end{proof}

\newpage

\begin{theorem}\label{theoremB}
	The Wang shift $\Omega_\Zcal$ generated by the tiles $\Zcal$
	is self-similar, minimal and aperiodic.
\end{theorem}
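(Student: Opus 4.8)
The plan is to derive all three properties from the self-similar structure already established in Proposition~\ref{prop:self-similarity-OmegaZ}. Self-similarity is almost immediate: that proposition gives $\overline{\phi(\Omega_\Zcal)}^\sigma=\Omega_\Zcal$, so it only remains to check that $\phi$ is expansive. This follows because, as computed in the proof of Lemma~\ref{lemma1}, both the horizontal and vertical structures of $\phi$ coincide with the Fibonacci morphism $a\mapsto ab,\ b\mapsto a$; since the lengths of $\phi_\textsc{horiz}^k(a)$ and $\phi_\textsc{vert}^k(a)$ grow like Fibonacci numbers, the width and height of $\phi^k(z)$ tend to $\infty$ for every tile $z\in\Hcal$, which is exactly expansiveness.

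For minimality and aperiodicity I would rely on two structural features of $\phi$: it is recognizable and primitive. Recognizability comes for free from the construction, since $\phi=\gamma_0\gamma_1\gamma_2$ is a composition of the desubstitution morphisms produced by Theorem~\ref{thm:if-markers-desubstitute}, each of which is recognizable and onto up to a shift, together with the alphabet bijection $\gamma_2$; a composition of recognizable morphisms is recognizable. Primitivity I would verify by a direct computation on the incidence matrix of $\phi$: one exhibits a power $k$ such that every tile of $\Hcal$ occurs in $\phi^k(z)$ for every $z\in\Hcal$, equivalently that the $k$-th power of the incidence matrix is strictly positive. This is the same kind of finite check already used throughout Section~\ref{sec:self-similarity}.

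Given primitivity, minimality follows from the standard theory of primitive substitutive subshifts: primitivity forces every pattern occurring in one configuration to occur in every configuration with bounded gaps (uniform, indeed linear, recurrence), so every orbit is dense and $\Omega_\Zcal$ is minimal. For aperiodicity I would use a descent argument based on recognizability. Suppose some $x\in\Omega_\Zcal$ admits a nonzero period $\bk\in\Z^2$. By recognizability there are a unique $y\in\Omega_\Zcal$ and a bounded vector $\bm$ with $x=\sigma^{\bm}\phi(y)$; the period $\bk$ of $x$ then induces a strictly shorter nonzero period of $y$, because $\phi$ expands lengths by a factor bounded below by the golden mean in each direction. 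Iterating yields an infinite strictly decreasing sequence of nonzero integer period vectors, which is impossible; hence no nonzero period exists and $\Omega_\Zcal$ is aperiodic. Alternatively, one may rule out axis-parallel periods directly, since projecting a configuration to its horizontal or vertical Fibonacci structure yields a Sturmian, hence aperiodic, word, and recognizability then excludes the remaining directions.

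The main obstacle is the aperiodicity statement in the two-dimensional setting, where \emph{aperiodic} means the absence of any nonzero period vector, not merely of a single generating period as in dimension one. Making the descent argument rigorous requires care in tracking how a period of $x$ descends to a genuine nonzero period of the desubstituted configuration $y$ under the recognizable morphism $\phi$; this is precisely where recognizability, in the form of unique decomposition up to a bounded shift, is essential, and it is the only step that is not a routine finite verification. The remaining ingredients, namely expansiveness, recognizability, and primitivity, are either immediate from Proposition~\ref{prop:self-similarity-OmegaZ} or reduce to finite computations on $\phi$.
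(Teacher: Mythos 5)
Your treatment of self-similarity and aperiodicity matches the paper's route: the paper also deduces aperiodicity from the fact that $\phi$ is expansive and recognizable, citing Proposition~6 of \cite{MR3978536}, and your infinite-descent sketch is essentially the proof of that cited result, so that part is fine (the expansiveness observation via $\phi_\textsc{horiz}=\phi_\textsc{vert}=\varphi$ is also correct).

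There is, however, a genuine gap in your minimality argument. Primitivity of $\phi$ yields minimality of the \emph{substitutive} subshift $\Xcal_\phi$, i.e.\ the subshift whose language is $\Lcal_\phi=\bigcup_{k}\Lcal(\phi^k(\Hcal))$. The statement to be proved concerns the Wang shift $\Omega_\Zcal$, defined by local matching rules, and the relation $\overline{\phi(\Omega_\Zcal)}^\sigma=\Omega_\Zcal$ from Proposition~\ref{prop:self-similarity-OmegaZ} does \emph{not} by itself identify $\Omega_\Zcal$ with $\Xcal_\phi$: iterating the relation only writes a configuration $x\in\Omega_\Zcal$ as $\sigma^{\bk_n}\phi^n(y_n)$ with $y_n\in\Omega_\Zcal$, so a finite pattern of $x$ sits inside $\phi^n(w)$ for some $2\times2$ block $w$ of $y_n$ straddling the images of several letters, and nothing in your argument guarantees that such a block $w$ belongs to $\Lcal_\phi$. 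In general several distinct subshifts can satisfy $X=\overline{\phi(X)}^\sigma$. The paper closes exactly this gap by computing the finitely many $2\times2$ seeds on which a power of $\phi$ is prolongable in each of the four quadrants, verifying that each of them occurs in $\Lcal_\phi$, and then invoking Lemma~3.9 of \cite{labbe_three_2020} to conclude that $\Xcal_\phi$ is the \emph{unique} self-similar subshift for $\phi$, whence $\Omega_\Zcal=\Xcal_\phi$; only after this identification does primitivity deliver minimality of $\Omega_\Zcal$. You need to add this seed verification, or some other argument showing $\Lcal(\Omega_\Zcal)\subseteq\Lcal_\phi$, before the ``standard theory of primitive substitutions'' applies.
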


\begin{proof}[of Theorem~\ref{theoremB}]
	From Proposition~\ref{prop:self-similarity-OmegaZ}
    we obtain that $\Omega_\Zcal$ is self-similar
    satisfying $\overline{\phi(\Omega_\mathcal{Z})}^\sigma = \Omega_\mathcal{Z}$.
    Since $\phi$ is expansive and recognizable, the Wang shift $\Omega_\Zcal$ is aperiodic
    \cite[Proposition 6]{MR3978536}.

    We compute the set of $2\times 2$ words for which the morphism $\phi$ is
    prolongable for some power of $\phi$ in each of the four quadrants:
{\scriptsize
\begin{verbatim}
sage: gamma012 = gamma0 * gamma1 * gamma2
sage: seeds = flatten(gamma012.prolongable_seeds_list())
sage: seeds
[
[ 8 12]  [14 11]  [ 8 12]  [14 11]  [ 9 11]  [13 12]  [ 9 12]  [13 11]
[ 1  6], [13 12], [ 7 13], [ 6  5], [ 8 12], [ 3  6], [10 14], [ 2  4]
]
\end{verbatim}}
\noindent
We check whether these $2\times2$ factors belong to the language $\Lcal_\phi$
generated from the application of the morphism on the letters:
{\scriptsize
\begin{verbatim}
sage: seeds = [[list(reversed(col)) for col in seed.columns()] for seed in seeds]
sage: F = gamma012.list_2x2_factors()
sage: all(seed in F for seed in seeds)
True
\end{verbatim}}
\noindent
    Thus the substitutive subshift $\Xcal_\phi$ is the unique self-similar subshift
    $X\subset\Zrange{15}^{\Z^2}$ satisfying $\overline{\phi(X)}^\sigma=X$
    \cite[Lemma 3.9]{labbe_three_2020}.
    Thus $\Xcal_\phi=\Omega_\Zcal$. 
    Moreover $\Xcal_\phi$ is minimal since $\phi$ is primitive and expansive.
    \qed
\end{proof}

%
%

\end{document}